\documentclass{amsart}

\usepackage[letterpaper,margin=1in]{geometry}
\usepackage{mathtools, verbatim, rotating, comment, amsmath, amsfonts, amsthm, amssymb, graphicx, tablefootnote}
\usepackage[dvipsnames]{xcolor}
\usepackage[shortlabels]{enumitem}
\usepackage{url,hyperref,multirow}
\usepackage[matrix,arrow]{xy}

\numberwithin{equation}{section}

\newtheorem{thm}{Theorem}[section]
\newtheorem{prop}[thm]{Proposition}
\newtheorem{conj}[thm]{Conjecture}
\theoremstyle{definition}
\newtheorem{defn}[thm]{Definition}
\theoremstyle{remark}
\newtheorem{rem}[thm]{Remark}
\DeclareMathOperator{\coker}{coker}
\newcommand{\prim}{\mathrm{prim}}
\DeclareMathOperator{\Pic}{Pic}
\DeclareMathOperator{\Sym}{Sym}
\DeclareMathOperator{\sym}{sym}
\DeclareMathOperator{\MS}{MS}
\newcommand{\GL}{\mathrm{GL}}
\newcommand{\SL}{\mathrm{SL}}
\newcommand{\Prim}{\mathrm{Prim}}

\newcommand{\OUT}{\mathrm{out}}
\newcommand{\IN}{\mathrm{in}}
\newcommand{\END}{\mathrm{END}}

\newcommand{\PP}{\mathbb{P}}
\renewcommand{\SS}{\mathbb{S}}
\newcommand{\ZZ}{\mathbb{Z}}

\newcommand{\C}{\mathcal{C}}
\renewcommand{\L}{\mathcal{L}}
\newcommand{\N}{\mathcal{N}}
\newcommand{\OO}{\mathcal{O}}
\newcommand{\R}{\mathcal{R}}

\newcommand{\cross}{\times}
\newcommand{\tensor}{\otimes}

\newcommand{\longto}{\mathop{\longrightarrow}\limits}
\newcommand{\textand}{\quad \text{and} \quad}

\renewcommand{\to}{\mathop{\rightarrow}\limits}
\newcommand{\size}[1]{\lvert #1 \rvert}

\newcommand{\floor}[1]{\left\lfloor #1 \right\rfloor}
\newcommand{\Bigfloor}[1]{\Big\lfloor #1 \Big\rfloor}
\newcommand{\ceil}[1]{\left\lceil #1 \right\rceil}
\newcommand{\Bigceil}[1]{\Big\lceil #1 \Big\rceil}

\newcommand{\isom}{\cong}

\renewcommand{\(}{\left(}
\renewcommand{\)}{\right)}

\newcommand{\ignore}[1]{}
\newcommand{\ds}{\displaystyle}

\renewcommand{\epsilon}{\varepsilon}

\newcommand{\quickcol}[1]{%
  \begin{tabular}{@{}c@{}}%
    #1
  \end{tabular}
}

\hyphenation{Dats-kov-sky Bhar-ga-va Pa-le-stri-na}

\begin{document}

\title[%
  Cohomology of the incidence correspondence
]{%
  Even-carry polynomials\\%
  and cohomology of the incidence correspondence\\%
  in positive characteristic
}
\author{%
  Evan M. O'Dorney
}
\begin{abstract}
We consider the cohomology groups of line bundles $\mathcal{L}$ on the \emph{incidence correspondence}, that is, a general hypersurface $X \subset \mathbb{P}^{n-1} \times \mathbb{P}^{n-1}$ of degrees $(1,1)$. Whereas the characteristic $0$ situation is completely understood, the cohomology in characteristic $p$ depends in a mysterious way on the base-$p$ digits of the degrees $(d, e)$ of $\mathcal{L}$. Gao and Raicu (following Linyuan Liu) prove a recursive description of the cohomology for $n = 3$, which relates to Nim polynomials when $p = 2$. In this paper, we devise a suitable generalization of Nim polynomials, which we call \emph{even-carry polynomials,} by which we can solve the recurrence of Liu--Gao--Raicu to yield an explicit formula for the cohomology for $n = 3$ and general $p$. We also make some conjectures on the general form of the cohomology for general $n$ and $p$, for which a recurrence relation was recently derived by Kyomuhangi--Marangone--Raicu--Reed.

\end{abstract}

\maketitle

\emph{Keywords:} Incidence correspondence; sheaf cohomology; characteristic $p$; Nim polynomial; tile

\emph{Acknowledgement:} The Version of Record of this manuscript has been published and is available in Experimental Mathematics, April 1, 2025, \url{http://www.tandfonline.com/10.1080/10586458.2025.2481257}

\paragraph{MSC 2020:} 
14M07 
14J60 
14G17 

\section{Introduction} \label{sec:intro}

A broad and longstanding area of research in algebraic geometry is to compute the cohomology $H^i(X, \L)$ of line bundles on varieties $X$ over a field $\Bbbk$. This is difficult in general, especially when $\Bbbk$ has positive characteristic $p$, as there are fewer general theorems to apply.

Following Gao--Raicu \cite{GaoRaicu}, we look at the \emph{incidence correspondence}, that is, the hypersurface $X \subset \PP^{n-1} \cross \PP^{n-1}$ given by the vanishing of a single nondegenerate $(1,1)$-form
\[
  \omega(\vec z, \vec w) = z_1 w_1 + \cdots + z_n w_n.
\]
This is a natural variety to study; so natural, indeed, that its cohomology groups have been studied multiple times in different guises, as we will mention below. In characteristic zero, the cohomology is governed by the Borel--Weil--Bott theorem, which fails in positive characteristic.

We have $\Pic X \isom \ZZ^2$, with the inclusion $\iota : X \to \PP^{n-1} \cross \PP^{n-1}$ inducing an isomorphism on Picard groups. By symmetry considerations and known results (Serre duality, Kempf vanishing), Gao--Raicu show \cite{GaoRaicu} that the only degrees in which cohomology can occur are $0$, $n-2$, $n-1$, and $2n-1$ and that all cohomology of line bundles on $X$ is determined by the values of
\[
  K(d, e) \coloneqq H^{n-1}(X, \L_{d,e}) \quad \text{where} \quad  \L_{d,e} = \iota^*\(\OO_{\PP^{n-1} \cross \PP^{n-1}}(e, 1 - d - n)\), \quad d, e \geq 0.
\]
The group $\SL_n$ acts on $X$, preserving each element of the Picard group, and thus $K(d, e)$ is a finite-dimensional representation of $\SL_n$. Hence it is natural to consider its character $\kappa(d, e)$, which lies in the character ring
\[
  A = \big(\mathbb{Z}[x_1,\ldots,x_n]/(x_1 \cdots x_n - 1)\big)^{S_n}.
\]
The character $\kappa(d,e)$ determines the representation $K(d,e)$ up to semisimplification.

Gao--Raicu provide a recurrence relation for computing the character $\kappa(d,e)$ in the case $n = 3$, originally due to Linyuan Liu who studies these representations in another setting \cite[Th\'eor\`eme 1 and Proposition 9]{Liu2023}. Explicit, non-recursive formulas for $\kappa(d,e)$ have only been found when $p = 2$, due to Gao \cite{GaoThesis} in the cases $n = 3, 4$ and to Kyomuhangi--Marangone--Raicu--Reed \cite{KMRR24} for general $n$. In these cases, the formulas are in terms of \emph{Nim polynomials,} which are sums over integer triples satisfying a condition on their binary digits arising in the game of Nim.

The main result of this paper (Theorem \ref{thm:Even_Carry}) is an explicit, non-recursive formula for $\kappa(d,e)$ for $n = 3$ and all $p$. This provides an explicit solution to the recurrence of Liu--Gao--Raicu. The key advance is an appropriate generalization of Nim polynomials for general $p$. We call them \emph{even-carry polynomials,} because their terms are indexed by integer triples whose addition in base $p$ involves only carrying even digits ($0$ and $2$).

We naturally wonder what happens for larger $n$. Unfortunately, the known recurrence \cite[Theorem 1.1]{KMRR24} does not make obvious what should be the suitable generalization of Nim/even-carry polynomials. To this end, we carve out suitable pieces of $\kappa(d,e)$, which we call \emph{primitive cohomology polynomials,} or \emph{Prim polynomials} $\Prim(m)$ for short (Section \ref{sec:conj}). We list some known examples and conjectural patterns in the Prim polynomials, as steps toward computing $\kappa(d,e)$ for general $n$ and $p$.

\section{Representations of \texorpdfstring{$\SL_n$}{SL\textunderscore n} in characteristic \texorpdfstring{$p$}{p}}

\subsection{Schur functions} \label{sec:schur}
If $W$ is a vector space, we denote by $S^m W$ and $D^m W$ the $m$th symmetric and divided powers of $W$, respectively, which are related by
\[
D^m(W) = \(S^m(W^*)\)^*.
\]

If $\lambda = (\lambda_1,\ldots,\lambda_d)$ is a partition, we denote by $s_\lambda \in A$ the character of the representation of $\SL_n$ obtained by applying the corresponding Schur functor $\SS_\lambda$ to the standard representation $V = \Bbbk^n$. We have $s_\lambda \neq 0$ exactly when $\lambda$ has at most $n$ (nonzero) parts. Trailing zero parts in $\lambda$ have no effect. For example:
\begin{itemize}
  \item $s_{()} = 1$ (the empty partition)
  \item $s_1 = [V] = x_1 + \cdots + x_n$
  \item $s_d = [\Sym^d V] = [D^d V] = \ds \sum_{a_1 + \cdots + a_n = d} \!\!\vec {\ t\ \!}^{\!\vec a}$ 
  \item $\ds s_{\underbrace{\scriptstyle1,\ldots,1}_{d}} = [\Lambda^d V] = \sum_{i_1 < \cdots < i_d} x_{i_1} \cdots x_{i_d}$
  \item $s_{\underbrace{\scriptstyle1,\ldots,1}_{n}} = [\Lambda^n V] = x_1\cdots x_n = 1$. More generally, if $\lambda$ is written with $n$ entries, then adding $1$ to each entry of $\lambda$ has no effect.
\end{itemize}
In view of the last example, we can extend the definition of $s_\lambda$ to arbitrary weakly decreasing sequences $\lambda_1 \geq \cdots \geq \lambda_n$ of integers, not necessarily positive, by shifting all entries upward. The $s_\lambda$ with $\lambda_n = 0$ form a basis of the character ring $A$. When $\lambda$ is not weakly decreasing, we set $s_\lambda = 0$.

In characteristic $0$, the $\SS_\lambda V$ are precisely the irreducible representations of $\SL_n$. In positive characteristic, things are more subtle: each $\SS_\lambda V$ is an indecomposable, but not necessarily irreducible, representation whose socle is the irreducible $\SL_n$-module of highest weight $\lambda$.

We use a superscript $\vee$ to denote the involution on $A$ coming from sending each generator $x_i$ to its inverse $x_i^{-1} = x_1\cdots \widehat{x_i} \cdots x_n$. This corresponds to dualizing the corresponding representation.

If $W$ is a representation of $\SL_n\Bbbk$, we let $F(W)$ be the Frobenius twist of $W$ (where elements act by their entrywise $p$th powers). By extension, we use $F$ to denote the Frobenius map on polynomial rings (such as $S, R, A$) that sends each generator $z_i, w_i, x_i$ to its $p$th power. We write $F^k$ for the $k$-fold composition of $F$ (denoted by $F^{p^k}$ in \cite{GaoRaicu}).

\subsection{Truncated Schur functions} \label{sec:trunc}
Let $q = p^r$. Another useful element of $A$ is the \emph{truncated} symmetric function
\[
s_d^{(q)} = \ds \sum_{\substack{a_1 + \cdots + a_n = d \\ 0 \leq a_i < q}} {\vec x}^{\vec a},
\]
the character of the cokernel
\[
S_d^{(q)}V = \coker(F^r(V) \tensor S^{d - q} V \to S^d V).
\]
As a basis of $F^r(V)$ is a regular sequence in $S^\bullet V$, the Koszul complex offers a resolution
\[
  \cdots \to S_d^{(q)}V \to \Lambda^3 F^r(V) \tensor S^{d - 3q} V \to \Lambda^2 F^r(V) \tensor S^{d - 2q} V \to F^r(V) \tensor S^{d - q} V \to S^d V \to S_d^{(q)}(V),
\]
yielding the formula
\begin{equation} \label{eq:formula for trunc_sym}
  s_d^{(q)} = \sum_{i = 0}^{n} (-1)^i F^r \big(s_{\underbrace{\scriptstyle1,\ldots,1}_{i}}\big) \cdot s_{d - iq}.
\end{equation}

Taking their cue from the Jacobi--Trudi identity
\[
s_{a,b} = s_a s_b - s_{a+1} s_{b-1},
\]
Gao and Raicu define, following Walker \cite[\textsection 1]{Walker94}, the \emph{truncated Schur functions}
\begin{equation} \label{eq:def_trunc_s}
  s_{a,b}^{(q)} = s_a^{(q)} s_b^{(q)} - s_{a+1}^{(q)} s_{b-1}^{(q)}.
\end{equation}
Note the dualities
\begin{equation} \label{eq:trunc_dual}
  s_a^{(q)\vee} = s_{n(q-1) - a}^{(q)}, \quad s_{a,b}^{(q)\vee} = s_{n(q-1) - b, n(q-1) - a}^{(q)}.
\end{equation}
However, the characters $s_{a,b}^{(q)}$ are not so well-behaved as $s_a^{(q)}$: the ``truncation'' may actually \emph{increase} the highest weight. For instance, let $n = 3$ and display elements of $A$ in a shorthand triangle form so that, for instance, $\quickcol{4 \\ 5\ \ 7}$ means $4x_1 + 5x_2 + 7x_3$. For $p = 2$, $n = 3$, we have
\[
s_{3,2} = \quickcol{
  1\ \ 1\ \ 1 \\
  1\ \ 2\ \ 2\ \ 1 \\
  1\ \ 2\ \ 1 \\
  1\ \ 1%
} \qquad \text{but} \qquad 
s_{3,2}^{(4)} = \quickcol{
  1 \\
  1\ \ 1 \\
  1\ \ 1\ \ 1 \\
  1\ \ 2\ \ 2\ \ 1 \\
  1\ \ 1\ \ 2\ \ 1\ \ 1 \\
  1\ \ 1\ \ 1\ \ 1\ \ 1\ \ 1\rlap{,}%
}
\]
and other truncations have striking nonconvex shapes, such as (letting $p = 5$, $n = 3$)
\[
s_{6,6}^{(5)} = \quickcol{
  1 \\
  1\ \ 1 \\
  1\ \ 1\ \ 1\ \ 1\ \ 1\ \ 1\ \ 1 \\
  1\ \ 1\ \ 1\ \ 1\ \ 1\ \ 1 \\
  1\ \ 1\ \ 1\ \ 1\ \ 1 \\
  1\ \ 1\ \ 1\ \ 1\ \ 1\ \ 1 \\
  1\ \ 1\ \ 1\ \ 1\ \ 1\ \ 1\ \ 1\rlap{.} \\
  1\ \ 1 \\
  1%
}
\]
Such shapes do \emph{not} appear in the observed data for $\kappa(d,e)$, and so we must use $s_{a,b}^{(q)}$ cautiously, only in the ranges of values for $a$ and $b$ for which it is well behaved. Such is the case when both $a$ and $b$ are less than $q$ (because the truncation has no effect) or both are at least $(n-1)(q-1)$ (by \eqref{eq:trunc_dual}) or when $a$ is large and $b$ is small: 
\begin{prop} \label{prop:trunc}
  If $a = n(q-1) - a'$ with $a', b \leq q-1$, then
  \[
    s_{a,b}^{(q)} = s_{a' + b, \underbrace{\scriptstyle a', \ldots, a'}_{n-2}}.
  \]
\end{prop}
\begin{proof}
  By \eqref{eq:trunc_dual}, we have
  \begin{align*}
    s_{a,b}^{(q)} &= s_a^{(q)} s_b^{(q)} - s_{a+1}^{(q)} s_{b-1}^{(q)} \\
    &= s_{a'}^\vee s_b - s_{a' - 1}^\vee s_{b-1}.
  \end{align*}
  The desired identity then reduces to 
  \begin{equation} \label{eq:under}
    s_{b, \underbrace{\scriptstyle 0, \ldots, 0}_{n-2}, -a'} = s_{a'}^\vee s_b - s_{a' - 1}^\vee s_{b-1}.
  \end{equation}
  This identity is a rather well-known special case (see Grinberg \cite[Corollary 4.31]{GrinbergPelletier}) is a consequence of the tensor product formula due to Koike \cite[equation (0.3)]{KoikeDecomp}:
  \begin{equation} \label{eq:Koike}
    s_{\lambda_1,\ldots,\lambda_r, \underbrace{\scriptstyle 0, \ldots, 0}_{n-r-s}, -\mu_s, \ldots, -\mu_1} = \sum_{\nu} (-1)^{\size{\nu}} s_{\lambda/\nu} s_{\mu/\nu^\top}^\vee,
  \end{equation}
  where $\nu$ runs over partitions, $\nu^\top$ is the conjugate of $\nu$, and $s_{\lambda/\nu}$ is the skew Schur polynomial. In our instance, $\lambda$ and $\mu$ have only one part, so the only two terms are $\nu = ()$ and $\nu = (1)$, verifying \eqref{eq:under}.
\end{proof}

\subsection{Interpretations of \texorpdfstring{$K(d,e)$}{K(d,e)}}
In this subsection, we recall the existing results from \cite{GaoRaicu}, which include an effective algorithm to compute $K(d,e)$. Let $\Bbbk$ be a field and $V = \Bbbk^n$, $n \geq 3$. 
We work on $\PP^{n-1} = \PP(V)$ and $\PP^{n-1} \cross \PP^{n-1} = \PP(V) \cross \PP(V)^\vee$. We use the tautological exact sequence
\begin{equation} \label{eq:R_res}
  0 \to \R \to V \tensor \OO_{\PP^{n-1}} \to \OO_{\PP^{n-1}}(1) \to 0
\end{equation}
where $\R$ is the tautological subbundle of rank $n-1$. Then for $j \geq n - 2$ (we are only concerned with $j = n-2$), we have \cite[Theorem 1.3]{GaoRaicu}
\[
  H^j(X, \L_{d,e}) \isom H^{j - n + 2}(\PP^{n-1}, D^d \R(e - 1)).
\]
Applying the divided power functor $D^d$ to the resolution \eqref{eq:R_res} gives a resolution of the divided powers $D^d\R$, from which we get an exact sequence:
\[
\xymatrix@C=1.5em{
 0 \ar[r] & H^0(\PP^{n-1}, D^d \R(e-1)) \ar@{=}[d] \ar[r] &
 D^d V \tensor S^{e-1} V \ar[r] & D^{d-1} V \tensor S^{e} V \ar[r] & H^1(\PP^{n-1}, D^d \R(e-1)) \ar@{=}[d] \ar[r] & 0
  \\
 & K(d,e) & & & K(e,d)^\vee
}
\]
In other words, studying $K(d,e)$ is tantamount to understanding the kernel (or equivalently, the cokernel) of the natural map
\[
\mu_{d,e} : D^d V \tensor S^{e-1} V \to D^{d-1} V \tensor V \tensor S^{e-1} V \to D^{d-1} V \tensor S^{e} V.
\]
Note that $\mu_{d,e}$ is equivariant with respect to $\SL(V)$, indeed $\GL(V)$, and hence induces the representation-theoretic structure of the desired sheaf cohomology over either one of these groups. Its character $\kappa(d,e) \in A$ may be represented by a symmetric, homogeneous polynomial of degree $d + e - 1$ in $\mathbb{Z}[x_1,\ldots,x_n]$.

Unraveling the definitions, we come up with the following completely explicit description of $K(d,e)$:

\begin{prop}
Let $d$ and $e$ be positive integers. Consider the matrix $M$ over $S = \Bbbk[z_1, z_2, \ldots, z_n]$ such that
\begin{itemize}
  \item the rows are indexed by the monomials $f_1, \ldots, f_r$ of degree $d - 1$ in $S$
  \item the columns are indexed by the monomials $g_1, \ldots, g_s$ of degree $d$ in $S$
  \item the entries are
  \[
    M_{ij} = \begin{cases*}
      \dfrac{g_j}{f_i} & if $f_i \mid g_j$ \\
      0 & otherwise.
    \end{cases*}
  \]
\end{itemize}
Then $K(d,e)$ is the $(e - 1)$st graded piece of the kernel of $M$, viewed as a map from $S^s$ to $S^r$ that increases degree by $1$, and similarly $K(e,d)^\vee$ is the $e$th graded piece of the cokernel of $M$.
\end{prop}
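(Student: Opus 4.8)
The plan is to unwind the description of $K(d,e)$ from the discussion above and observe that $M$ is nothing but the map $\mu_{d,\bullet}$ (the connecting homomorphism produced by the tautological resolution of $D^d\R$) written out in monomial coordinates, assembled over all twists into a single morphism of graded $S$-modules; the proposition then just reads off its kernel and cokernel degree by degree. First I would fix the \emph{divided-power} monomial basis of $D^mV$: it has the basis $\{\gamma_{\vec a}\}_{|\vec a| = m}$ dual to the monomial basis of $S^mV^*$, and I identify $\gamma_{\vec a}$ with the degree-$m$ monomial $z^{\vec a}\in S$. This is the choice compatible with characteristic $p$: with respect to it the comultiplication $\delta\colon D^mV\to D^{m-1}V\otimes V$, being dual to the coefficient-free multiplication $S^{m-1}V^*\otimes V^*\to S^mV^*$, has all structure constants in $\{0,1\}$,
\[
  \delta(\gamma_{\vec a}) \;=\; \sum_{i\,\colon\, a_i>0}\gamma_{\vec a-\vec e_i}\otimes z_i ,
\]
equivalently $\delta(g) = \sum_f f\otimes(g/f)$ summed over monomials $f\mid g$ with $\deg f=\deg g-1$. (The naive identification $D^mV\cong S^mV$ would produce the coefficients $a_i$, which degenerate mod $p$ and would spoil the shape of $M$.)

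Next I would expand $\mu_{d,\bullet}$ as the composite
\[
  D^dV\otimes S^{\bullet}V \xrightarrow{\ \delta\otimes\id\ } D^{d-1}V\otimes V\otimes S^{\bullet}V \xrightarrow{\ \id\otimes\mathrm{mult}\ } D^{d-1}V\otimes S^{\bullet+1}V ,
\]
exactly as $\mu_{d,e}$ is defined above. On a basis vector $\gamma_g\otimes h$, with $g$ a degree-$d$ monomial and $h$ a form of degree $\bullet$, the previous step gives
\[
  \mu_{d,\bullet}(\gamma_g\otimes h) \;=\; \sum_{\substack{f\mid g\\ \deg f=d-1}}\gamma_f\otimes\tfrac{g}{f}\,h
\]
(here $f\mid g$ with $\deg f=d-1$ forces $g/f$ to be a single variable). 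Identifying $\gamma_g$ with the $g$-indexed generator of the free module $S^s$ and $\gamma_f$ with the $f$-indexed generator of $S^r$, this is precisely left multiplication by $M$, and since each entry $g/f$ is linear it raises the $S$-grading (the degree of the coefficient forms) by one. So $M$ and the family $\mu_{d,\bullet}$ are literally the same map of graded $S$-modules.

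Finally, the discussion above identifies $K(d,e)$ with the kernel of this map on the summand whose symmetric-power factor is $S^{e-1}V$ and $K(e,d)^\vee$ with its cokernel (landing in the summand with factor $S^{e}V$); transported across the identification just established, these are the $(e-1)$st graded piece of $\ker M$ and the $e$th graded piece of $\coker M$. The content is confined to two bookkeeping points: arranging the divided-power basis so that the structure constants of $\delta$ are exactly the $g/f$-entries of $M$ (no binomial factors surviving in characteristic $p$), and keeping the grading and twist conventions straight so that the relevant graded pieces are the $(e-1)$st and the $e$th rather than neighboring ones. I expect the latter to be the only real opportunity for error, and it is routine once the first identification is in place.
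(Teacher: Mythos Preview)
Your proof is correct and is precisely the ``unraveling the definitions'' that the paper gestures at; the paper offers no argument beyond that phrase, so there is nothing further to compare. Your explicit handling of the divided-power monomial basis—so that the comultiplication $\delta$ has structure constants in $\{0,1\}$ rather than binomial coefficients that could vanish mod $p$—is exactly the point that makes the entries of $M$ come out as the bare quotients $g/f$, and is the only place where the characteristic-$p$ setting requires care.
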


Because $K(d,e)$ and $K(e,d)$ determine one another, we will usually restrict our attention to $K(d,e)$ for $e \leq d$.

We can further interpret each individual eigenspace of $K(d,e)$ under the torus of diagonal matrices, that is, each individual term of $\kappa(d,e)$. Let $S = \Bbbk[z_1,\ldots, z_n]$ as before, and let $R = S[w_1,\ldots, w_n]$. Consider the $R$-module
\[
  M = \left. R[w_1^{-1},\ldots,w_n^{-1}] \middle/ \sum_i R[w_1^{-1},\ldots,\widehat{w_i}^{-1},\ldots,w_n^{-1}] \right.
\]
Explicitly,
\[
M = \bigoplus_{\substack{a_i,b_j \geq 0}} \Bbbk \cdot \frac{z_1^{b_1} \cdots z_n^{b_n}}{w_1^{1 + a_1} \cdots w_1^{1 + a_n}},
\]
with an $R$-action in which any term with a nonnegative exponent of any $w_i$ is deemed zero. Give $M$ the bigrading
\[
  M_{d,e} = \bigoplus_{\substack{a_i,b_j \geq 0 \\ |\vec a|_1 = d, |\vec b|_1 = e}} \Bbbk \cdot \frac{z_1^{b_1} \cdots z_n^{b_n}}{w_1^{1 + a_1} \cdots w_1^{1 + a_n}}
\]
in which $\deg z_i = (0,1)$, $\deg w_i = (-1,0)$. The bigrading is constructed so that
\[
  M_{d,e} \isom D^d V \tensor S^e V.
\]
Multiplication by $\omega = z_1 w_1 + \cdots + z_n w_n$ induces a map
\[
  M_{d,e-1} \longto^{\omega} M_{d-1,e},
\]
so that we get yet another interpretation of $K(d,e)$:
\begin{equation} \label{eq:monsky_piece}
  K(d,e) = \ker\(M_{d,e-1} \longto^{\omega} M_{d-1,e}\)
\end{equation}
is a graded piece of $K = M/\omega M$.

Moreover, we can decompose $M_{d,e}$ into subspaces
\[
  M_{d,e,\vec u} = \bigoplus_{\substack{a_i,b_j \geq 0 \\ |\vec a|_1 = d, |\vec b|_1 = e, \vec a + \vec b = \vec u}} \Bbbk \cdot \frac{z_1^{b_1} \cdots z_n^{b_n}}{w_1^{1 + a_1} \cdots w_1^{1 + a_n}}.
\]
indexed by nonnegative integer vectors with $|\vec u|_1 = d + e$. Then for $|\vec u|_1 = d + e - 1$,
\begin{equation}
  K(d,e)_{|\vec u|_1} = \ker\(M_{d,e-1} \longto^{\omega} M_{d-1,e}\)
\end{equation}
is the $\vec u$th eigenspace of $K(d,e)$, the space whose dimension is the coefficient of $\!{\vec{\ t\ \!}}^{\!\vec{u}}$ 
in $\kappa(d,e)$.

All this is related to the work of Han--Monsky \cite[Theorem 4.5]{HanMonsky}, who find a recurrence for the length of a module of the form
\[
  \Bbbk[z_1,\ldots,z_n]/(z_1^{u_1+1}, z_2^{u_2+1}, \ldots, z_n^{u_n+1}, z_1 + \cdots + z_n).
\]
This is the kernel of multiplication by $\omega$ on a module
\[
  M_{\vec u} = \bigoplus_{\substack{a_i,b_j \geq 0 \\ \vec a + \vec b = \vec u}} \Bbbk \cdot \frac{z_1^{b_1} \cdots z_n^{b_n}}{u_1^{1 + a_1} \cdots u_1^{1 + a_n}}
\]
where $\vec u$ is fixed but $d$ and $e$ can vary, a detail that simplifies the analysis and yields results quite unlike ours.

\subsection{Known cases}

In characteristic zero, the representations $K(d,e)$ are computable using the Borel--Weil--Bott theorem \cite[p.~67 (p.~4 in arXiv version)]{GaoRaicu}: their characters are the Schur functions
\[
\kappa(d,e) = \begin{cases}
  s_{(e - 1, d)} & d < e \\
  0 & \text{otherwise}
\end{cases}
\]
corresponding to partitions $(\lambda_1, \lambda_2)$ with at most two parts.

However, in characteristic $p$, there are additional kernel elements derived from the Frobenius map
\begin{align*}
  F : M &\to M \\
      M_{d,e} &\to M_{p d + (p-1)n,p e} \\
      \frac{c z_1^{b_1} \cdots z_n^{b_n}}{w_1^{1 + a_1} \cdots w_1^{1 + a_n}} & \mapsto \frac{c z_1^{p b_1} \cdots z_n^{p b_n}}{w_1^{p(1 + a_1)} \cdots w_1^{p(1 + a_n)}}.
\end{align*}
Observe that $F$ is $R$-equivariant with the Frobenius map $F : R \to R$ that raises each generator $z_i$, $w_i$ to the $p$th power. In particular, for $\alpha \in M$,
\[
  F(\omega \cdot \alpha) = \omega^p F(\alpha),
\]
whence we get a map
\begin{align*}
  F_K : K &\to K \\
  K(d,e) &\to K\big(p d + (p - 1)(n - 1), p e + p - 1\big) \\
  \alpha &\mapsto \omega^{p-1} F(\alpha).
\end{align*}
In an unpublished preprint, Gao and Raicu observed that $K$ appears to be generated as an $R$-module by lifts of characteristic-zero cocycles (which occur only for $d < e$) and their iterates under $F_K$. Below, we hint at a rather smaller generating set (Conjecture \ref{conj:strip}).

We briefly summarize the cases in which the character $\kappa(d,e)$ is known.
\begin{itemize}
  \item For $e < p$, the character $\kappa(d,e) = s_{e-1,d}$ is the same as in characteristic zero. In particular, it vanishes in the region $d \geq e$ of interest to us.
  \item If $p \leq e < 2p$ and $d \geq e$, the character $\kappa(d,e) = s^{(p)}_{d-1+p, e-p}$ is a truncated Schur function \cite[Theorem 1.6]{GaoRaicu}. (Note that the order of variables differs from that of \cite{GaoRaicu}: our $\kappa(d,e)$ is their $h^0(d,e) = h^1(e,d)$.)
  \item It is known for which $d$ and $e$ we have $K(d,e) = 0$ \cite[Theorem 1.4]{GaoRaicu}; this computes the \emph{Castelnuovo--Mumford regularity} of divided powers of the tautological subbundle $\R$ \cite[Theorem 1.3]{GaoRaicu}.
  \item For $n = 3$ (so $X$ is a complete flag variety), there is a recursive formula for $\kappa(d,e)$; see the next section.
  \item For $p = 2$, an explicit formula in terms of Nim polynomials is known (\cite{KMRR24}; see Theorem \ref{thm:KMRR} below).
\end{itemize}

\section{Even-carry polynomials} \label{sec:even}

For $n = 3$, Gao and Raicu produce a recurrence for $\kappa(d,e)$, originally due to Linyuan Liu who studies it in yet another setting \cite[Th\'eor\`eme 1 and Proposition 9]{Liu2023}.

\begin{thm}[\cite{GaoRaicu}, Theorem 1.7] \label{thm:GR_rec}
  Suppose that $n = 3$, and let $d \geq e > 0$ be integers. Let $1 \leq t < p$ and $k$ be such that $t p^k \leq e < (t + 1) p^k$. Then $\kappa(d,e)$ can be computed as follows:
  \begin{itemize}
    \item If $d \geq (t + 1)p^k - 1$ then $\kappa(d,e) = 0$.
    \item Otherwise, as a character in $A$,
    \begin{equation} \label{eq:recur}
      \begin{aligned}
        \kappa(d,e) &= F^k(s_{(t)}^\vee) \cdot \kappa(d - tp^k, e - tp^k) + F^k(s_{(t-1)}^\vee) \cdot s^{(p^k)}_{d-1+(2-t)p^k, e - t p^k}  \\
        &\quad {} + F^k(s_{(t-2)}^\vee) \cdot \kappa(p^k(t+1) - e - 2, p^k(t+1) - d - 2)^\vee.
      \end{aligned}
    \end{equation}
  \end{itemize}
\end{thm}

\begin{rem} \label{rem:Schur}
  We have written the recurrence using a truncated Schur function as in \cite{GaoRaicu}. However, the truncation lies in the range of Proposition \ref{prop:trunc} and so
  \[
    s^{(p^k)}_{d-1+(2-t)p^k, e - t p^k} = s_{p^k + e - d - 2, (t + 1)p^k - d - 2}
  \]
  is an ordinary Schur function.
\end{rem}

When $p = 2$, a non-recursive formula exists for $\kappa(d,e)$ in terms of the so-called \emph{Nim polynomials}
\begin{equation} \label{eq:Nim_poly}
  N_n(m) = \sum_{\substack{a_1 + \cdots + a_n = 2 m \\ a_1 \oplus \cdots \oplus a_n = 0}} x_1^{a_1} \cdots x_n^{a_n} \in A.
\end{equation}
where $\oplus$ denotes the Nim sum (bitwise XOR, that is, noncarrying addition in base $2$). For $n = 3$, this was shown by Gao and Raicu:

\begin{thm}[\cite{GaoRaicu}, Theorem 1.9]\label{thm:GR_Nim}
  Suppose that $n = 3$, $p = 2$, and $2^k \leq e \leq d < 2^{k+1}$ for some $k \geq 1$. Write the binary expansion $d = (d_k\cdots d_0)_2$, and for $i = 1,\ldots, k$, consider the left and right truncations of the binary expansion of $d$:
  \[
    \ell_i(d) = (d_k \cdots d_{i})_2 = \floor{\frac{d}{2^i}} \textand r_i(d) = (d_{i-1} \cdots d_0)_2 = d - 2^i \ell_{i-1}(d),
  \]
  and define $\ell_i(e), r_i(e)$ analogously. Set
  \[
    I = \{i : 1 \leq i \leq k, d_i = e_i = 1, \ell_i(d) = \ell_i(e), r_i(d) \leq 2^i - 2\}.
  \]
  Then
  \[
    \kappa(d,e) = \sum_{i \in I} F^{i + 1}(N_{\ell_i(e)}) s^{(2^i)}_{r_i(d) - 1 + 2^{i+1}, r_i(e)}.
  \]
\end{thm}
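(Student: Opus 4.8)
The plan is to induct on $k$, reducing everything to the Liu--Gao--Raicu recurrence of Theorem \ref{thm:GR_rec}. First I would specialize that recurrence to $p = 2$: then $t$ is forced to equal $1$, the coefficient $s_{(t-1)}^\vee = s_{(0)}^\vee$ equals $1$, and the final summand vanishes because $s_{(t-2)} = s_{(-1)} = 0$ (the sequence $(-1,0,0)$ is not weakly decreasing). So, for $2^k \le e \le d < 2^{k+1}$, the recurrence becomes $\kappa(d,e) = 0$ when $d = 2^{k+1}-1$, and otherwise
\[
  \kappa(d,e) \;=\; F^k(s_1^\vee)\,\kappa\bigl(d - 2^k,\; e - 2^k\bigr) \;+\; s^{(2^k)}_{\,d - 1 + 2^k,\; e - 2^k}.
\]
The case $d = 2^{k+1}-1$ matches the claimed formula trivially: then $r_i(d) = 2^i - 1$ for all $i$, so the condition $r_i(d) \le 2^i - 2$ fails always and $I = \emptyset$. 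Hence it suffices to check that the proposed formula for $\kappa(d,e)$ satisfies the two-term identity above, where $\kappa$ of a pair with second entry $0$ is interpreted as $0$ (consistent with the Gao--Raicu exact sequence, since $D^mV \to D^{m-1}V \tensor V$ is injective).

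Two combinatorial inputs drive the induction. The first is a \emph{multiplicativity of Nim polynomials}: writing $m = \sum_j m_j 2^j$ in base $2$, we have $N_m = \prod_{j : m_j = 1} F^j(s_1^\vee)$ in $A$ (here $N_m = N_3(m)$, and $N_1 = s_1^\vee$ because $x_1x_2x_3 = 1$). I would prove this by a no-carry observation: a triple with $a_1 \oplus a_2 \oplus a_3 = 0$ has, in each binary place, digit sum $0$ or $2$, so $a_1 + a_2 + a_3$ is exactly twice the integer whose $1$-bits record the ``$2$-places'', forcing that integer to be $m$; and in each $2$-place one sums freely over which $a_i$ carries the $0$, contributing a factor $F^j(x_1x_2 + x_1x_3 + x_2x_3) = F^j(s_1^\vee)$. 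The second input is bookkeeping: put $\widehat d = d - 2^k = r_k(d)$ and $\widehat e = e - 2^k = r_k(e)$ (i.e., delete the top bit). Then $r_i(d) = r_i(\widehat d)$, $r_i(e) = r_i(\widehat e)$ for $1 \le i \le k$, and for $1 \le i \le k-1$ also $d_i = \widehat d_i$, $e_i = \widehat e_i$, while $\ell_i(d) = 2^{k-i-1} + \ell_i(\widehat d)$ and $\ell_i(e) = 2^{k-i-1} + \ell_i(\widehat e)$, so that $\ell_i(d) = \ell_i(e) \iff \ell_i(\widehat d) = \ell_i(\widehat e)$. Consequently, in the nontrivial range $\widehat d \le 2^k - 2$ with $\widehat e \ge 1$, the index set of $(d,e)$ decomposes as $I = \widehat I \sqcup \{k\}$, where $\widehat I$ is that of $(\widehat d,\widehat e)$: the indices $\le k-1$ match; $\widehat I \subseteq \{1,\dots,k-1\}$ since $\widehat e < 2^k$, so $k \notin \widehat I$; and $k \in I$ always in this range. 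Moreover, since $\ell_i(\widehat e) < 2^{k-i-1}$, the relation $\ell_i(e) = 2^{k-i-1} + \ell_i(\widehat e)$ together with Nim multiplicativity gives $F^{i+1}(N_{\ell_i(e)}) = F^k(s_1^\vee)\cdot F^{i+1}(N_{\ell_i(\widehat e)})$ for each $i \in \widehat I$.

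The inductive step is then a direct computation. Split the proposed sum for $(d,e)$ along $I = \widehat I \sqcup \{k\}$. The $i = k$ term is $F^{k+1}(N_0)\, s^{(2^k)}_{r_k(d)-1+2^{k+1},\,r_k(e)} = s^{(2^k)}_{\widehat d - 1 + 2^{k+1},\,\widehat e} = s^{(2^k)}_{d - 1 + 2^k,\, e - 2^k}$, which is the second summand of the recurrence. The sum over $i \in \widehat I$, using that the truncated-Schur factors for $(d,e)$ and $(\widehat d,\widehat e)$ coincide (as the $r_i$ agree) and pulling out $F^k(s_1^\vee)$ via the displayed Nim relation, becomes $F^k(s_1^\vee)$ times the proposed sum for $(\widehat d,\widehat e)$, which by the inductive hypothesis equals $F^k(s_1^\vee)\,\kappa(\widehat d,\widehat e)$, the first summand. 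It then remains to treat the boundary cases where the inductive hypothesis is not directly available: if $\widehat e = 0$, then $\kappa(\widehat d,\widehat e) = 0$ and one checks $I = \{k\}$ directly; if $\widehat e \ge 1$ but $\widehat d \ge 2^{k'+1}-1$ where $2^{k'} \le \widehat e < 2^{k'+1}$, then the first bullet of Theorem \ref{thm:GR_rec} gives $\kappa(\widehat d,\widehat e) = 0$, while a short argument ($\ell_i(\widehat d) > \ell_i(\widehat e)$ for all $i \le k'$ when $\widehat d \ge 2^{k'+1}$, and $r_i(\widehat d) = 2^i - 1$ when $\widehat d = 2^{k'+1}-1$) shows $\widehat I = \emptyset$; either way both sides collapse to $s^{(2^k)}_{d-1+2^k,\,e-2^k}$. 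These boundary cases, specialized to $k = 1$, also furnish the base of the induction.

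The hard part will be this boundary analysis: one must match, across every degenerate configuration, the purely combinatorial vanishing $\widehat I = \emptyset$ with the cohomological vanishing $\kappa(\widehat d,\widehat e) = 0$, and keep the index-shift bookkeeping perfectly consistent — in particular the translation of $\ell_i$ under deleting the top bit, and the mismatch between the $F^k$ in the recurrence's coefficient and the $F^{i+1}$ in the Nim factors. By contrast, the Nim-multiplicativity lemma, though indispensable, is short once the no-carry observation is made, and Remark \ref{rem:Schur} guarantees that none of the truncated Schur functions appearing exhibit the pathological shapes noted after \eqref{eq:def_trunc_s}.
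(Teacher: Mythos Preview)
The paper does not prove Theorem~\ref{thm:GR_Nim} itself; it is quoted from \cite{GaoRaicu}. What the paper \emph{does} prove is the generalization Theorem~\ref{thm:Even_Carry} (all $p$, $n=3$), and your argument is exactly the $p=2$ specialization of that proof. Your Nim-multiplicativity lemma $N_{2^{j}+m'} = F^{j}(s_1^\vee)\cdot N_{m'}$ is the $p=2$ instance of Proposition~\ref{prop:even_carry}\ref{ec:rec} (via the identification $\C_2(2m+1)=F\,N_3(m)^\vee$ proved in \S3.1), and your split of the claimed sum along $I=\widehat I\sqcup\{k\}$, with the $i=k$ term matching the explicit Schur summand and the $\widehat I$ part matching $F^k(s_1^\vee)\,\kappa(\widehat d,\widehat e)$, is precisely the paper's decomposition $\kappa=\kappa_1+\kappa_0$ (the third piece $\kappa_2$ vanishes for $p=2$ because $s_{(-1)}=0$, as you note). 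Your proof is correct.

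The one substantive difference is in \emph{formulation} rather than strategy. Theorem~\ref{thm:GR_Nim} sums over an explicit index set $I$ cut out by bit conditions, so you are obliged to verify combinatorially that $\widehat I=\emptyset$ in each degenerate configuration ($\widehat e=0$; $\widehat d\ge 2^{k'+1}$; $\widehat d=2^{k'+1}-1$). By contrast, the paper's Theorem~\ref{thm:Even_Carry} sums over \emph{all} $1\le i\le k$ and lets the Schur factor $s_{\lambda_1,\lambda_2}$ vanish by convention whenever $(\lambda_1,\lambda_2)$ is not weakly decreasing; the boundary checks you flag as ``the hard part'' then become automatic (see the paragraph in the proof of Theorem~\ref{thm:Even_Carry} showing that the Schur functions in $\kappa_0$ and $\kappa_2$ vanish unless $\lceil(d+2)/p^i\rceil = 1+\lfloor e/p^i\rfloor$). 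So your approach buys fidelity to the Gao--Raicu statement at the cost of the extra index-set bookkeeping; the paper's reformulation absorbs that bookkeeping into the Schur-function conventions, which is what makes the general-$p$ argument go through uniformly.
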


An extension to general $n$ was recently announced by Kyomuhangi, Marangone, Raicu, and Reed, correcting and proving a conjecture of Gao \cite[Conjecture 4.4.1]{GaoThesis}:

\begin{thm}[Kyomuhangi--Marangone--Raicu--Reed; unpublished, \cite{KMRR24}, Theorem 1.3] \label{thm:KMRR} For $p = 2$ and $n \geq 3$,
  \[
  \kappa(d,e) = \sum_{\substack{k \geq 0; m, j \geq 1 \\ 2^{k} (2m + 2j + 1) \leq e}} F^{{k+1}}(N_n(m)) \cdot s^{(2^k)}(d - 2^{k}(2m - 2j - 1), e - 2^k(2m + 2j + 1)),
  \]
  where $N_n(m)$ is the Nim polynomial from \eqref{eq:Nim_poly}.
\end{thm}

 However, the notion of Nim polynomial is not easy to generalize to all $n$ and $p$. In this section, we provide a suitable generalization of Nim polynomials for $n = 3$ and general $p$ to solve the recurrence \eqref{eq:recur} explicitly. 

\begin{defn}
  Fix an integer $p \geq 2$. A finite multiset $\{a_1, a_2, \ldots, a_r\}$ of nonnegative integers is \emph{even-carry} if, for all $i \geq 1$, the $i$th carry
  \[
  c_i = \Bigfloor{\frac{a_1 + \cdots + a_r}{p^i}} - \Bigfloor{\frac{a_1}{p^i}} - \ldots - \Bigfloor{\frac{a_r}{p^i}},
  \]
  which is the carry into the $p^i$ place in working out the sum $a_1 + \cdots + a_r$ in base $p$, is an even integer. We define the $m$th \emph{even-carry polynomial}
  \[
  \C_p(m) \quad = \sum_{\substack{a_j \geq 0 \\ a_1+a_2+a_3+1 = m \\ \{a_1,a_2,a_3,1\}\text{ even-carry}}} x_1^{a_1} x_2^{a_2} x_3^{a_3} \quad \in \quad A.
  \]
\end{defn}
We note the following:
\begin{prop}\label{prop:even_carry}
  The even-carry polynomials $\C_p(m)$ have the following properties:
  \begin{enumerate}[$($a$)$]
    \item \label{ec:small} If $m < p$, then $\C_p(m) = s_{(m-1)}$ is the sum of all monomials of degree $m$.
    \item \label{ec:rec} We have the following recurrence: if $1 \leq t < p$ and $0 \leq m < p^k$, then
    \[
    \C_p(t p^k + m) = F^k s_{(t)} \cdot \C_p(m) + F^k s_{(t - 2)} \C_p(p^k - 1 - m)^\vee.
    \]
    \item \label{ec:digit} In terms of the digit expansion
    \[
    m = \sum_{i = 0}^k d_i p^i, \quad 0 \leq d_i < p, \quad d_k \neq 0,
    \]
    we have the following explicit description:
    \[
    \C_p(m) = \sum_{c_1,\ldots, c_k \in \{0, 2\}} \prod_{i = 0}^k \begin{cases*}
      F^i s_{d_i - c_i} & if $c_{i+1} = 0$ \\
      F^i s_{p - 1 - d_i + c_i}^\vee & if $c_{i+1} = 2$
    \end{cases*}
    \]
    where $c_{k+1} = 0$, $c_0 = 1$.
  \end{enumerate}
\end{prop}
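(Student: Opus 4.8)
The plan is to prove the three parts in the order (a), (b), (c), using (a) as the base case and (b) as the inductive engine, then deriving (c) from (b) by unwinding the recursion one digit at a time. Part (a) is essentially a definitional observation: if $m < p$, then for any triple with $a_1 + a_2 + a_3 + 1 = m < p$, the partial sums of the $p^0$-digits never reach $p$, so all higher carries $c_i$ vanish, hence are even, and every such monomial contributes; thus $\C_p(m)$ is the sum of all monomials of degree $m - 1$ in $x_1, x_2, x_3$, which is $s_{(m-1)}$.

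For part (b), the idea is to split the monomials $x_1^{a_1} x_2^{a_2} x_3^{a_3}$ indexed in $\C_p(t p^k + m)$ according to the carry $c_k$ into the $p^k$ place. Write $a_j = p^k \alpha_j + \beta_j$ with $0 \le \beta_j < p^k$, and similarly decompose the fixed summand $1 = p^k\cdot 0 + 1$ (here we use $k \ge 1$, which is forced since $tp^k \le tp^k + m$ and $t \ge 1$, $m \ge 0$, so $tp^k + m \ge p > $ the $m < p$ regime — more precisely the statement only asserts the recurrence, so $k \ge 1$ is implicit in the hypothesis $0 \le m < p^k$). The low-order part $\beta_1 + \beta_2 + \beta_3 + 1$ equals either $m$ (if $c_k = 0$) or $m + p^k$ (if $c_k = 2$; it cannot be larger since three summands each $< p^k$ plus $1$ give a total $< 3p^k + 1$, so the carry is $0$ or $2$ — note this is exactly where ``even-carry'' with three-plus-one summands is constrained to $\{0,2\}$). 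In the first case the high-order parts satisfy $\alpha_1 + \alpha_2 + \alpha_3 = t$ with no further constraint from the even-carry condition at levels $> k$ beyond what $\{\alpha_j\}$ itself contributes — but since $\alpha_1 + \alpha_2 + \alpha_3 = t < p$, there are no carries out of the $p^k$ place at all, so every choice of $\alpha_j$ is allowed, contributing the factor $F^k s_{(t)}$; and the even-carry condition on the full multiset restricts to the even-carry condition on $\{\beta_1, \beta_2, \beta_3, 1\}$, giving $\C_p(m)$. In the second case ($c_k = 2$), we have $\beta_1 + \beta_2 + \beta_3 + 1 = m + p^k$, i.e. $(p^k - 1 - \beta_1) + (p^k - 1 - \beta_2) + (p^k - 1 - \beta_3) + 1 = 3p^k - (m + p^k) + 1 - 2 = 2p^k - m - 1$ — I will need to recompute this carefully, but the point is that complementing each $\beta_j$ to $p^k - 1 - \beta_j$ turns the even-carry condition into the even-carry condition for $\C_p(p^k - 1 - m)$, and complementing the exponents corresponds to applying the duality $\vee$; meanwhile $\alpha_1 + \alpha_2 + \alpha_3 = t - 2$ (since a carry of $2$ leaves $t - 2$), yielding $F^k s_{(t-2)}$, or zero if $t < 2$. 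This is the step I expect to be the main obstacle: getting the bookkeeping of the complement-and-dualize bijection exactly right, including checking that the even-carry condition at level $k$ itself (the carry $c_k$ being even) is automatically consistent, and that no monomials are double-counted or dropped at the boundary cases $t \in \{1, 2\}$.

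For part (c), I would iterate (b). Starting from $m = \sum_{i=0}^k d_i p^i$ with $d_k \neq 0$, one application of (b) with $t = d_k$ peels off the top digit, expressing $\C_p(m)$ as $F^k s_{(d_k)} \cdot \C_p(m')$ plus $F^k s_{(d_k - 2)} \cdot \C_p(p^k - 1 - m')^\vee$ where $m' = \sum_{i=0}^{k-1} d_i p^i$. Note $p^k - 1 - m' = \sum_{i=0}^{k-1}(p - 1 - d_i)p^i$, so the ``complemented'' argument again has a standard base-$p$ expansion with digits $p - 1 - d_i$, and the $\vee$ distributes over the recursion (it is a ring involution commuting with $F$, and $\C_p$ of a complemented argument dualizes termwise). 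Thus each further application of (b) introduces a choice of whether we are in the ``straight'' branch ($c_{i+1} = 0$, factor $F^i s_{d_i}$ — wait, with the accumulated shift it becomes $F^i s_{d_i - c_i}$ once a carry $c_i$ has been generated at the level below) or the ``dual'' branch ($c_{i+1} = 2$, factor $F^i s_{p-1-d_i+c_i}^\vee$). Carrying out this induction on $k$, indexing the branches by the carry sequence $(c_1, \ldots, c_k) \in \{0,2\}^k$ with the conventions $c_0 = 1$, $c_{k+1} = 0$, and using (a) for the base case $i = 0$ (where the factor is $F^0 s_{d_0 - c_0} = s_{d_0 - 1}$, matching $\C_p$ of the bottom digit, or its dual), produces exactly the stated product formula. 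The bookkeeping here is routine once (b) is established; the only care needed is to confirm that the indexing convention in (c) — namely that $c_{i+1}$ governs whether the factor at level $i$ is straight or dual, and $c_i$ appears additively inside that factor — matches what the recursion produces, which is a direct check against the two terms of (b).
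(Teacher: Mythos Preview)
Your proposal for parts (a) and (b) is correct and follows exactly the paper's approach: for (a) the even-carry condition is vacuous when $m<p$, and for (b) one splits the sum defining $\C_p(tp^k+m)$ according to the carry $c_k\in\{0,2\}$ into the $p^k$-place, with the $c_k=2$ branch handled by the complementation $\beta_j\mapsto p^k-1-\beta_j$. Your arithmetic slip (the low part equals $2p^k+m$, not $p^k+m$, when $c_k=2$) is the one you flag, and once corrected the complement indeed has $\gamma_1+\gamma_2+\gamma_3+1 = p^k-1-m$. The key fact you will need to pin down is that this complementation sends each lower carry $c_i$ to $2-c_i$, hence preserves the even-carry property; the paper asserts this in one line.

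For part (c) your route differs from the paper's. You propose to iterate (b), peeling off one digit at a time and tracking how the dual $\vee$ propagates through the recursion; this works, but requires two extra bookkeeping points you gloss over. First, intermediate digits $d_i$ may be zero, so (b) as stated (with $1\le t$) does not literally apply; you need the trivial extension to $t=0$, where the second term vanishes since $s_{-2}=0$. Second, when you are inside a dualized branch and apply (b) again, the two sub-branches correspond to the \emph{complemented} carry $2-c_i$ rather than $c_i$ itself, so matching the indexing of (c) requires care. The paper instead proves (c) directly from the definition: it fixes the full carry vector $(c_1,\dots,c_k)$ at once, observes that for each digit position $i$ the three digits $a_{ji}$ must sum to $h=d_i+pc_{i+1}-c_i$ with $0\le a_{ji}<p$, and identifies the generating function for such triples as the truncated symmetric function $s^{(p)}_h$, which simplifies to $s_h$ or $s_{3(p-1)-h}^\vee$ in the relevant ranges. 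This avoids the recursive dual-tracking entirely and makes the role of $c_0=1$ (encoding the extra $+1$ summand) transparent.
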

\begin{proof}
  \begin{enumerate}[$($a$)$]
    \item If $m < p$, then the condition $a_1 + a_2 + a_3 + 1 = m$ implies that the $a_i$ are one-digit numbers in base $p$ and there are no carries, so  the even-carry condition is vacuous.
    \item This is proved by dividing into cases based on the last carry $c_k$. If $c_k = 0$, then the terms of $\C_p(t p^k + m)$ can be gotten by combining the possible leading digits (forming $F^k s_{(t)}$) with the rest of each $a_i$ (forming $\C_p(m)$). If $c_k = 2$, then the leading digits now sum to $t - 2$, and the rest of each $a_i$ can form part of a term of $\C_p(2 p^k + m)$, but then we subtract each digit from $p^k - 1$, amounting to the dualization map on $A$. This subtracts all the carries from $2$, so the even-carry property is preserved, and we get $\C_p(p^k - 1 - m)$.
    \item Similarly, this is proved by dividing into cases based on all carries $c_i$. Denote by $a_{ji}$ the $i$th digit of the number $a_j$. The digits $a_{ji}$ must satisfy
    \[
    \sum_{j} a_{ji} = d_i + pc_{i+1} - c_i, \quad 0 \leq a_{ji} < p.
    \]
    The generating function of triples $\{a_j\}_{j=1}^3$ with $\sum_j a_j = h$ fixed and $0 \leq s < p$ is a truncated symmetric function $s^{(p)}_h$; conveniently, the sum $h = d_i + pc_{i+1} - c_i$ lies in regions for which $s_h^{(p)}$ is easy to describe:
    \begin{itemize}
      \item If $c_{i+1} = 0$, then $h \leq p - 1$ and $s^{(p)}_h = s_h$;
      \item If $c_{i+2} = 2$, then $h \geq 2(p - 1)$ and $s^{(p)}_h = s^{(p)\vee}_{3(p-1) - h} = s_{3(p-1) - h}^\vee$.
    \end{itemize}
    Multiplying over $i$ and applying the appropriate Frobenius twists yields the term of the sum corresponding to the choice of carries $c_i$. Note that we set $c_0 = 1$ to account for the added $1$ in the condition $a_1 + a_2 + a_3 + 1 = m$. \qedhere
  \end{enumerate}
\end{proof}
This leads to the following explicit description of the cohomology character:
\begin{thm}\label{thm:Even_Carry}
  Assume that $n = 3$, and let $d \geq e > 0$ be positive integers. 
  Then
  \begin{equation} \label{eq:h1_even_carry}
    \kappa(d,e) = \sum_{i = 1}^{\floor{\log_p e}} F^i \C_p\Big(\Bigfloor{\frac{e}{p^i}}\Big)^\vee \cdot s_{p^i + e - d - 2, p^i + p^i \floor{\frac{e}{p^i}} - d - 2}.
  \end{equation}
\end{thm}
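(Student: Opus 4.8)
The plan is to prove \eqref{eq:h1_even_carry} by strong induction on $e$, matching the Liu--Gao--Raicu recurrence (Theorem \ref{thm:GR_rec}) against the recurrence and digit description for even-carry polynomials in Proposition \ref{prop:even_carry}. Fix $d \ge e > 0$; let $k = \floor{\log_p e}$ and $t = \floor{e/p^k}$, so that $1 \le t < p$ and $tp^k \le e < (t+1)p^k$ exactly as in Theorem \ref{thm:GR_rec}, and set $d_1 = d - tp^k$, $e_1 = e - tp^k$ together with the $k$-digit complements $\bar d = p^k - 1 - d_1$ and $\bar e = p^k - 1 - e_1$. These satisfy $\bar e \ge \bar d$, and the arguments in the last term of \eqref{eq:recur} are precisely $\bar e - 1 = p^k(t+1) - e - 2$ and $\bar d - 1 = p^k(t+1) - d - 2$. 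I first dispose of the easy regimes. If $e < p$ the sum in \eqref{eq:h1_even_carry} is empty and $\kappa(d,e) = s_{e-1,d} = 0$ for $d \ge e$: this is the base case. If $d \ge (t+1)p^k - 1$, Theorem \ref{thm:GR_rec} gives $\kappa(d,e) = 0$, while on the right side $\floor{e/p^i} \le (t+1)p^{k-i} - 1$ for $1 \le i \le k$ forces the second index $p^i(1 + \floor{e/p^i}) - d - 2$ of the $i$-th Schur factor down to at most $(t+1)p^k - d - 2 \le -1$, so every term vanishes. (In general the first index of that Schur factor exceeds its second by $e \bmod p^i \ge 0$, so a term of \eqref{eq:h1_even_carry} can only vanish through a negative second index — a fact used repeatedly below.)

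For the inductive step, assume $p \le e < p^{k+1}$ and $d < (t+1)p^k - 1$, and split the sum in \eqref{eq:h1_even_carry} into the top term $i = k$ and the lower terms $i < k$. In the top term $\C_p(\floor{e/p^k}) = \C_p(t) = s_{t-1}$ by Proposition \ref{prop:even_carry}(a), and Remark \ref{rem:Schur} identifies the Schur factor $s_{p^k + e - d - 2,\,(t+1)p^k - d - 2}$ with the truncated Schur function $s^{(p^k)}_{d - 1 + (2-t)p^k,\, e - tp^k}$; thus the top term is exactly the middle term of \eqref{eq:recur}. To each lower term I would apply Proposition \ref{prop:even_carry}(b) with $K = k - i \ge 1$ and $m = \floor{e_1/p^i}$ to $\C_p(\floor{e/p^i}) = \C_p(tp^{k-i} + \floor{e_1/p^i})$, dualize, and apply $F^i$, which gives
\[
  F^i\C_p\big(\floor{e/p^i}\big)^\vee \;=\; F^k(s_t^\vee)\cdot F^i\C_p\big(\floor{e_1/p^i}\big)^\vee \;+\; F^k(s_{t-2}^\vee)\cdot F^i\C_p\big(\floor{\bar e/p^i}\big),
\]
where I use the digit-complement identity $p^{k-i} - 1 - \floor{e_1/p^i} = \floor{\bar e/p^i}$. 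Summing over $1 \le i \le k - 1$ and re-expressing the Schur factors in terms of $d_1,e_1$ (resp. $\bar d,\bar e$), the lower terms collect into $F^k(s_t^\vee)\cdot(\star) + F^k(s_{t-2}^\vee)\cdot(\star\star)$, where $(\star)$ is verbatim the right-hand side of \eqref{eq:h1_even_carry} for the pair $(d_1,e_1)$ (the extra terms at the top of the range vanish since $\C_p(0) = 0$), hence equals $\kappa(d_1,e_1)$ by the inductive hypothesis, and
\[
  (\star\star) \;=\; \sum_{i=1}^{k-1} F^i\C_p\big(\floor{\bar e/p^i}\big)\cdot s_{\,p^i + \bar d - \bar e - 2,\; \bar d - 1 - p^i\floor{\bar e/p^i}\,}.
\]
Matching the remaining two terms of \eqref{eq:recur} thus reduces to the single identity $(\star\star) = \kappa(\bar e - 1, \bar d - 1)^\vee$.

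This identity is the crux. Applying the inductive hypothesis to $\kappa(\bar e - 1,\bar d - 1)$ — legitimate because $\bar e - 1 \ge \bar d - 1$ and $\bar d - 1 < e$ — and dualizing via $s_{a,b}^\vee = s_{a,a-b}$ (valid for $n = 3$) produces $\sum_j F^j\C_p(\floor{(\bar d-1)/p^j})\cdot s_{\,p^j + \bar d - \bar e - 2,\;(\bar d-1)\bmod p^j\,}$, whose even-carry factor tracks $\floor{(\bar d-1)/p^j}$ rather than the $\floor{\bar e/p^j}$ appearing in $(\star\star)$. The reconciliation is that, since $\bar e \ge \bar d > \bar d - 1$, we always have $\floor{\bar e/p^i} \ge \floor{(\bar d-1)/p^i}$; when they are equal the two $i$-th terms coincide outright, and when $\floor{\bar e/p^i} > \floor{(\bar d-1)/p^i}$ a one-line estimate gives $p^i\floor{\bar e/p^i} \ge \bar d$, so the $i$-th term of $(\star\star)$ acquires a negative second index, while the companion estimate $p^i(1 + \floor{(\bar d-1)/p^i}) \le \bar e$ makes the first index of the $i$-th term of $\kappa(\bar e-1,\bar d-1)^\vee$ smaller than its second — so both terms vanish. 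A final check at the top of the summation ranges (terms with $\floor{(\bar d-1)/p^i} = 0$ are killed by $\C_p(0) = 0$) yields $(\star\star) = \kappa(\bar e-1,\bar d-1)^\vee$ term by term, completing the induction. I expect this vanishing argument — establishing that the "wrong" digit truncation produced by Proposition \ref{prop:even_carry}(b) occurs only where the Schur factors already degenerate — to be the main obstacle; the degenerate boundary cases (e.g. $e_1 = 0$ or $\bar d = 1$, where one or both sides are manifestly zero, noting that $\bar d \ge 1$ throughout the inductive step) are routine.
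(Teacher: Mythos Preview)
Your proof is correct and follows essentially the same route as the paper's: both induct via Theorem~\ref{thm:GR_rec}, match the $i=k$ summand with the middle term of \eqref{eq:recur}, and hinge on the same vanishing observation (your comparison of $\floor{\bar e/p^i}$ with $\floor{(\bar d-1)/p^i}$ is, after unwinding the digit complements, exactly the paper's comparison of $1+\floor{e/p^i}$ with $\ceil{(d+2)/p^i}$). The only cosmetic differences are that the paper inducts on $d+e$ rather than on $e$, and applies the inductive hypothesis \emph{before} invoking Proposition~\ref{prop:even_carry}\ref{ec:rec} rather than after.
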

\begin{proof}
As in Theorem \ref{thm:GR_rec}, we let $1 \leq t < p$ and $k$ be the integers for which $tp^k \leq e < (t + 1)p^k$. Note that the upper bound $i \leq k = \floor{\log_p e}$ is natural, because for $i > \floor{\log_p e}$ we have
  \[
    \C_p\(\Bigfloor{\frac{e}{p^i}}\) = \C_p(0) = 0.
  \]
  
We induct on $d + e$, taking advantage of the recurrence of Theorem \ref{thm:GR_rec} which expresses $\kappa(d,e)$ in terms of $\kappa(d',e')$ with $d' + e' < d + e$. As a base case, if $d \geq (t + 1)p^k - 1$, then the left side of \eqref{eq:h1_even_carry} vanishes by Theorem \ref{thm:GR_rec}, and the right side vanishes because, for $i \leq k$,
\begin{align*}
    p^i + p^i \floor{\frac{e}{p^i}} - d - 2
    &\leq p^i + p^i \((t+1)p^{k-i} - 1\) - \( (t + 1)p^k - 1\) - 2\\
    &= -1,
\end{align*}
and the Schur function vanishes by our conventions.

So we assume that $d < (t+1)p^k - 1$, implying that $r \leq k = \floor{\log_p e}$. Write
\[
  \kappa(d,e) = \kappa_0(d,e) + \kappa_1(d,e) + \kappa_2(d,e),
\]
where the $\kappa_i$ are the terms of \eqref{eq:recur}:
\begin{equation*} 
  \begin{aligned}
    \kappa_0(d,e) &= F^k(s_{(t)}^\vee) \cdot \kappa(d - tp^k, e - tp^k) \\
    \kappa_1(d,e) &= F^k(s_{(t-1)}^\vee) \cdot s^{(p^k)}_{d-1+(2-t)p^k, e - t p^k} = F^k(s_{(t-1)}^\vee) \cdot s_{p^k + e - d - 2, (t + 1)p^k - d - 2} \quad \text{by Remark \ref{rem:Schur}} \\
    \kappa_2(d,e) &= F^k(s_{(t-2)}^\vee) \cdot \kappa(p^k(t+1) - e - 2, p^k(t+1) - d - 2)^\vee.
  \end{aligned}
\end{equation*}
We first note that $\kappa_1(d,e)$ coincides with the $i = k$ term of \eqref{eq:h1_even_carry} since $\floor{e/p^k} = t$ lies in the range of Proposition \ref{prop:even_carry}\ref{ec:small}. The other two terms involve $\kappa(d',e')$ where $d', e'$ are less than $p^{k-1}$ by our choice of $k$ and $t$. We have
\begin{align*}
  \kappa_0(d,e) &= F^k(s_{(t)}^\vee) \cdot \kappa(d - tp^k, e - tp^k) \\
  &= F^k(s_{(t)}^\vee) \sum_{i = 1}^{k-1} F^i \C_p\Big(\Bigfloor{\frac{e - tp^k}{p^i}}\Big)^\vee s_{p^i + e - d - 2, p^i + p^i \floor{\frac{e - tp^k}{p^i}} - (d - tp^k) - 2} \\
  &= F^k(s_{(t)}^\vee) \sum_{i = 1}^{k-1} F^i \C_p\Big(\Bigfloor{\frac{e}{p^i}} - t p^{k-i}\Big)^\vee s_{p^i + e - d - 2, \(1 + \floor{\frac{e}{p^i}}\)p^i - d - 2},
\intertext{and}
  \kappa_2(d,e) &= F^k(s_{(t-2)}^\vee) \cdot \kappa(p^k(t+1) - e - 2, p^k(t+1) - d - 2)^\vee \\
  &= F^k(s_{(t-2)}^\vee) \sum_{i = 1}^{k-1} F^i \C_p\Big(\Bigfloor{\frac{p^k(t + 1) - d - 2}{p^i}}\Big) s_{p^i + e - d - 2, p^i + p^i \floor{\frac{(t + 1)p^k - d - 2}{p^i}} - (t + 1)p^k + e}^\vee \\
  &= F^k(s_{(t-2)}^\vee) \sum_{i = 1}^{k-1} F^i \C_p\Big(p^{k-i}(t + 1) - \Bigceil{\frac{d + 2}{p^i}}\Big) s_{p^i + e - d - 2, p^i - p^i \ceil{\frac{d + 2}{p^i}} + e}^\vee \\
  &= F^k(s_{(t-2)}^\vee) \sum_{i = 1}^{k-1} F^i \C_p\Big(p^{k-i}(t + 1) - \Bigceil{\frac{d + 2}{p^i}}\Big) s_{p^i + e - d - 2, p^i\ceil{\frac{d+2}{p^i}} - d - 2},
\end{align*}
where the last step uses the dualization formula
\[
  s_{a,b}^\vee = s_{a,b,0}^\vee = s_{0,-b,-a} = s_{a, a-b, 0} = s_{a, a-b}.
\]

Note that the only difference between the Schur functions appearing in $\kappa_0$ and $\kappa_2$ is the replacement of $1 + \floor{e/p^i}$ by $\ceil{(d+2)/p^i}$. We claim that these integers are equal if either of the Schur functions are nonzero. Since $d \geq e$, we have the bound
\[
  \Bigceil{\frac{d+2}{p^i}} = 1 + \Bigfloor{\frac{d+1}{p^i}} \geq 1 + \Bigfloor{\frac{e}{p^i}}.
\]
Supposing that equality does not hold, we have
\[
  \Bigceil{\frac{d+2}{p^i}} \geq \Bigfloor{\frac{e}{p^i}} + 2.
\]
We find that the Schur function $s_{\lambda_1,\lambda_2}$ appearing in $\kappa_0$ has $\lambda_2 < 0$, while the $s_{\lambda_1,\lambda_2}$ appearing in $\kappa_2$ has $\lambda_1 < \lambda_2$, so both terms are $0$. Consequently, we may replace $\ceil{(d+2)/p^i}$ by $1 + \floor{e/p^i}$ in $\kappa_2$ (including in the argument to $\C_p$), and combining the terms with the same Schur function,
\begin{align*}
  &\kappa_0(d,e) + \kappa_2(d,e)\\
  &{} = \sum_{i=1}^{k-1} \(F^k(s_{(t)}^\vee) F^i \C_p\Big(\Bigfloor{\frac{e}{p^i}} {-t} p^{k-i}\Big)^{\!\vee} \!+  F^k(s_{(t-2)}^\vee) F^i \C_p\Big(p^{k-i}(t{+ 1}){- 1}{- \Bigfloor{\frac{e}{p^i}}}\Big)\) s_{p^i + e - d - 2, \(1 + \floor{\frac{e}{p^i}}\)p^i - d - 2} \\
  &{} = \sum_{i=1}^{k-1} F^i \C_p\Big(\Bigfloor{\frac{e}{p^i}}\Big) s_{p^i + e - d - 2, \(1 + \floor{\frac{e}{p^i}}\)p^i - d - 2},
\end{align*}
where the last step follows from Proposition \ref{prop:even_carry}\ref{ec:rec}. This matches the $1 \leq i \leq k-1$ terms of \eqref{eq:h1_even_carry}, as desired.
\end{proof}

\subsection{Compatibility with Nim polynomials}
Observe that the $\C_p(m)$ in Theorem \ref{thm:Even_Carry} take the place of the Nim polynomial $\N_3(m)$ in Theorem \ref{thm:GR_Nim}. In fact, we can say more: the even-carry polynomials \emph{are} the Nim polynomials in this overlapping case.

\begin{prop}
  If $p = 2$, then
  \begin{align*}
    \C_p(2m) &= 0 \\
    \C_p(2m + 1) &= F\N_3(m)^\vee.
  \end{align*}
\end{prop}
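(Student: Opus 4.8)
The plan is to verify the two identities directly from the definition of $\C_p(m)$ as a sum over even-carry multisets $\{a_1,a_2,a_3,1\}$ with $a_1+a_2+a_3+1 = m$, specializing to $p=2$. First I would dispose of the vanishing statement $\C_2(2m)=0$. If the total sum $a_1+a_2+a_3+1$ is even, I would look at the carry $c_1$ into the $2^1$ place: we have $a_1+a_2+a_3+1 = 2m = (\text{sum of units digits}) - c_1 + 2(\cdots)$, so the units digits of $a_1,a_2,a_3,1$ sum to $c_1 \pmod 2$. Since the units digit of $1$ is $1$, the units digits of $a_1,a_2,a_3$ sum to $c_1+1 \pmod 2$; but for an even-carry multiset $c_1$ is even, so the units digits of $a_1,a_2,a_3$ sum to an odd number, forcing at least one $a_j$ to have units digit $1$. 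That alone does not kill the term, so instead I would argue parity of the total: $a_1+a_2+a_3 = 2m-1$ is odd, and I claim no even-carry multiset $\{a_1,a_2,a_3,1\}$ can have odd sum $2m$. Indeed in base $2$, $a_1+a_2+a_3+1 \equiv (\text{units digits summed}) + c_1 \cdot 0 \pmod 2$ at the lowest bit — more carefully, the $2^0$ bit of the total equals $(a_{1,0}+a_{2,0}+a_{3,0}+1)\bmod 2$ and carries do not affect it, and higher bits are governed recursively; the point is simply that even-carry forces the total's base-$2$ digit sum to have the same parity in each position as the summands, and summing four numbers one of which is $1$ — I would instead phrase this cleanly as: $c_i$ even for all $i$ means $\floor{m/2^i} \equiv \floor{a_1/2^i}+\floor{a_2/2^i}+\floor{a_3/2^i}+\floor{1/2^i} \pmod 2$ for all $i$, and taking $i=0$ gives $m \equiv a_1+a_2+a_3+1 \pmod 2$, i.e.\ $m$ is odd, contradicting $m$ even. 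Hence $\C_2(2m)=0$.

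Next, for the odd case, I would fix $m \geq 0$ and set up a bijection between the monomials in $\C_2(2m+1)$ and those in $F\N_3(m)^\vee$. A monomial $x_1^{a_1}x_2^{a_2}x_3^{a_3}$ appears in $\C_2(2m+1)$ iff $a_1+a_2+a_3 = 2m$ and $\{a_1,a_2,a_3,1\}$ is even-carry. By the same digit-parity observation, even-carry with an extra $1$ means: for all $i\geq 1$, $\floor{m \cdot 2 /2^i}$ wait — let me instead use $a_1+a_2+a_3 = 2m$ and analyze $\{a_1,a_2,a_3,1\}$. The carry $c_1$ into the $2$'s place: the units digits of $a_1,a_2,a_3$ plus $1$ sum to $c_1 + 2(\text{something})$, so (since $c_1$ even) an odd number of the $a_j$ are odd. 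I would then show by induction on the number of binary digits that $\{a_1,a_2,a_3,1\}$ even-carry with $\sum a_j = 2m$ is equivalent to each $a_j$ being odd with $a_j = 2b_j+1$, $b_1+b_2+b_3 = m-1$... no: $\sum(2b_j+1) = 2m$ gives $\sum b_j = m - 3/2$, impossible. So exactly one $a_j$ is odd. By symmetry of $\C$ this is getting complicated; the cleaner route is to compare with the known Theorem \ref{thm:GR_Nim} versus Theorem \ref{thm:Even_Carry}: both give $\kappa(d,e)$, so $F^{i+1}N_{\ell_i(e)}$ and $F^i\C_2(\floor{e/2^i})^\vee$ must play matching roles.

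Therefore my main plan for the odd identity is the \emph{indirect} one: appeal to Theorems \ref{thm:GR_Nim} and \ref{thm:Even_Carry}. Both express $\kappa(d,e)$ for $n=3$, $p=2$. In Theorem \ref{thm:Even_Carry}, the summand indexed by $i$ is $F^i\C_2(\floor{e/2^i})^\vee \cdot s_{2^i+e-d-2,\,2^i+2^i\floor{e/2^i}-d-2}$, whereas in Theorem \ref{thm:GR_Nim} the $i$-indexed summand (for $i\in I$) is $F^{i+1}N_{\ell_i(e)}\cdot s^{(2^i)}_{r_i(d)-1+2^{i+1},\,r_i(e)}$. I would check the Schur factors agree: using Proposition \ref{prop:trunc}/Remark \ref{rem:Schur}, $s^{(2^i)}_{r_i(d)-1+2^{i+1},r_i(e)} = s_{2^i+r_i(e)-r_i(d)-2,\,2^{i+1}-r_i(d)-2}$, and a short computation with $d = 2^{i+1}\ell_i(d)+2^i d_i + r_i(d)$ etc.\ (under the constraints $d_i=e_i=1$, $\ell_i(d)=\ell_i(e)$) shows this matches $s_{2^i+e-d-2,\,2^i+2^i\floor{e/2^i}-d-2}$, and when $i\notin I$ the index set constraints force the $\C_2$-side Schur factor to vanish by the sign/order conventions. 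Since the Schur factors match term-by-term and these bases of $A$ are independent, we get $F^i\C_2(\floor{e/2^i})^\vee = F^{i+1}N_{\ell_i(e)}$ whenever the common Schur factor is nonzero; writing $\floor{e/2^i} = 2^{i+1-i}\cdots$ — more simply, $\floor{e/2^i} = 2\ell_i(e) + e_i = 2\ell_i(e)+1$ on the relevant range — this reads $\C_2(2\ell_i(e)+1)^\vee = F N_{\ell_i(e)}$, i.e.\ $\C_2(2m+1) = F N_3(m)^\vee$ after dualizing (using $(FN)^\vee = F(N^\vee)$ and $N_3(m)^\vee = N_3(m)$ since the Nim condition is symmetric under $a_j \mapsto$ complement... actually $N_3(m)$ is not obviously self-dual, so I would just carry the $\vee$ through). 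The main obstacle is the bookkeeping: making the digit-truncation identities line up exactly and confirming the nonvanishing ranges coincide, so that the term-by-term matching is legitimate rather than merely a matching of sums. To be safe I would also sketch the \emph{direct} combinatorial bijection — a monomial $x_1^{a_1}x_2^{a_2}x_3^{a_3}$ in $\C_2(2m+1)$ corresponds, after the involution $\vee$ replacing $a_j$ by its complement within the appropriate digit length and the Frobenius untwist halving exponents, to a Nim-zero triple summing to $2m$ — and check the even-carry condition transports to the XOR-zero condition; this direct check is elementary once the dictionary is fixed, and serves as a cross-verification of the indirect argument.
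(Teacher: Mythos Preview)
Your argument for $\C_2(2m)=0$ has a genuine gap. The final step---``taking $i=0$ gives $m \equiv a_1+a_2+a_3+1 \pmod 2$, i.e.\ $m$ is odd''---is vacuous: by definition $m = a_1+a_2+a_3+1$, so this congruence reads $m \equiv m$ and says nothing about parity. The earlier attempts via $c_1$ never reach a conclusion either. For the odd case, your direct attempt goes astray (you arrive at ``exactly one $a_j$ is odd,'' whereas in fact all three $a_j$ turn out to be even), and your fallback---comparing the two expressions for $\kappa(d,e)$ from Theorems~\ref{thm:GR_Nim} and~\ref{thm:Even_Carry}---has precisely the gap you flag: the Schur factors for different $i$ are not a priori linearly independent, so equality of the two sums does not yield termwise equality of the coefficients. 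Making that rigorous would require choosing specific $(d,e)$ to isolate a single index $i$, and that in turn already needs $\C_2(\text{even})=0$ to kill the extra terms on the even-carry side.

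The missing idea, which drives the paper's short direct proof, is this: \emph{for $p=2$, an even-carry multiset has all carries equal to zero}. A carry $c_i \geq 2$ would make the column-$i$ total at least $2$, forcing $c_{i+1} \geq 1$, hence (being even) $c_{i+1} \geq 2$, and the carry would propagate forever---impossible. Once all carries vanish, the units column gives $a_{1,0}+a_{2,0}+a_{3,0}+1 \leq 1$, so every $a_j$ is even and $m = a_1+a_2+a_3+1$ is odd; this is exactly $\C_2(2m)=0$. For the odd argument $2m+1$, the zero-carry condition in the remaining columns says the halves $a_j/2$ have pairwise disjoint binary supports; under $b_j = m - a_j/2$ this becomes $b_1+b_2+b_3 = 2m$ with $b_1\oplus b_2\oplus b_3 = 0$, and in $A$ one has $x_1^{a_1}x_2^{a_2}x_3^{a_3} = x_1^{-2b_1}x_2^{-2b_2}x_3^{-2b_3}$, giving $\C_2(2m+1) = F\,\N_3(m)^\vee$ directly. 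No appeal to $\kappa(d,e)$ is needed.
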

\begin{proof}
Note that when $p = 2$, there can be no carries in an even-carry multiset. Indeed, a carry of at least $2$ in any column would lead to a carry of at least $1$ in the next column, hence at least $2$ by the even-carry condition, and the carry would have to propagate forever. It is easy to see that $\{a_1,a_2,a_3,1\}$ is an even-carry multiset if and only if all $a_i$ are even and
\[
(a_1 + a_2) \oplus (a_1 + a_3) \oplus (a_2 + a_3) = 0
\]
where $\oplus$ denotes the Nim sum. Thus $\C_p(m)$ vanishes for $m$ even, and
\[
  \C_p(2m+1) = \sum_{\substack{b_j \geq 0 \\ b_1+b_2+b_3 = 2m \\ b_1\oplus b_2\oplus b_3 = 0}} x_1^{-2b_1} x_2^{-2b_2} x_3^{-2b_3} = F\N_3(m)^\vee.
\]
\end{proof}

\section{Conjectures} \label{sec:conj}
\subsection{Primitive cohomology and Prim polynomials} \label{sec:prim}
The foregoing leaves open what should be the appropriate generalization of Nim/even-carry polynomials if $n > 3$ and $p > 2$. Here, we propose a definition.

Recall the interpretation of $K(d,e)$ from \eqref{eq:monsky_piece} above as a graded piece of an $R$-module map (here $R = \Bbbk[z_1,\ldots, z_n, w_1,\ldots, w_n]$),
\[
  K(d,e) = \ker \omega|_{M_{d,e-1}}, \quad
  M_{d,e} = \bigoplus_{|a|_1 = d, |b|_1 = e} \Bbbk \frac{z_1^{b_1} \cdots z_n^{b_n}}{w_1^{1 + a_1} \cdots w_n^{1+a_n}},
\]
and recall the Frobenius map (reminiscent of the Cartier isomorphism for de Rham cohomology)
\begin{align*}
  F_K : K &\to K \\
  K(d,e) &\to K\big(pd + (p - 1)(n - 1), pe + p - 1\big) \\
  \alpha &\mapsto \omega^{p-1} F(\alpha).
\end{align*}
Let $K(d,e)^{\prim}$ be the quotient of $K(d,e)$ by those cycles lying in the $R$-span of the image of $F$.

\begin{conj} \label{conj:strip}
  $K(d,e)^{\prim} = 0$ except when $d = e - 1$.
\end{conj}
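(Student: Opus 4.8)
\medskip
\noindent\emph{Towards a proof.}
We sketch an approach for $n=3$; for $n>3$, where no recursion for $\kappa(d,e)$ is available, the statement is genuinely open. Outside the strip $d=e-1$ there are two ranges to treat.

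Suppose first $d\ge e$. By Theorem~\ref{thm:Even_Carry} every summand of $\kappa(d,e)$ then carries a Frobenius twist $F^{i}$ with $i\ge 1$ --- indeed $K(d,e)\ne 0$ forces $k=\floor{\log_p e}\ge 1$, so the $i=0$ term is absent --- and the plan is to upgrade this identity in $A$ to a statement about the equivariant $R$-module $K=\ker\omega\subset M$. I would reprove the Liu--Gao--Raicu recursion of Theorem~\ref{thm:GR_rec} at the level of $\SL_n$-equivariant $R$-modules, exhibiting a three-step filtration of $K(d,e)$ whose graded pieces --- matching the three summands $\kappa_0,\kappa_1,\kappa_2$ of \eqref{eq:recur} --- are
\begin{itemize}
  \item $F^{k}\big(\Bbbk[w_1,w_2,w_3]_{t}\big)\cdot F_K^{k}\big(K(d-tp^k,\,e-tp^k)\big)$,
  \item $\big(\text{a copy of }\SS_{(p^k+e-d-2,\,(t+1)p^k-d-2)}V\text{ in }R\big)\cdot F_K^{k}\big(K(t-1,t)\big)$, the ``truncated Schur'' piece of Remark~\ref{rem:Schur}, of Frobenius origin because $k\ge 1$,
  \item $\big(\text{a copy of }K(a,b)^\vee\text{ in }R\big)\cdot F_K^{k}\big(K(t-2,t-1)\big)$, where $(a,b)=\big(p^k(t+1)-e-2,\,p^k(t+1)-d-2\big)$,
\end{itemize}
where $F_K^{k}$ denotes the $k$-fold iterate of $F_K$, which satisfies $F_K^{k}(s\xi)=F^{k}(s)\,F_K^{k}(\xi)$ for $s\in R$. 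Since $K(d-tp^k,e-tp^k)$ and $K(a,b)$ both have first argument no smaller than the second (because $d\ge e$), they lie in $R\cdot\im F_K$ by induction on $d+e$, and hence so does each layer; with base case $d\ge (t+1)p^k-1$, where $K(d,e)=0$, this gives $K(d,e)\subseteq R\cdot\im F_K$, i.e.\ $K(d,e)^{\prim}=0$.

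For $0\le d<e-1$ the module $K(d,e)=\SS_{(e-1,d)}V$ is already nonzero in characteristic zero; here the point is redundancy as a generator rather than Frobenius origin. Multiplication by the $z_i$ has bidegree $(0,1)$ and preserves $\ker\omega$, and a Pieri-type computation should show the induced map $K(d,e-1)\to K(d,e)$ is surjective once $d<e-1$ (clear for $e<p$, where $K(d,e)$ is the characteristic-zero module; for $e\ge p$ one separates off the extra Frobenius cocycles, which lie in $R\cdot\im F_K$ by the previous step, and treats the remaining part likewise). Descending on $e$, every such $K(d,e)$ is generated over $R$ from the strip $d=e-1$ together with $R\cdot\im F_K$, so $K(d,e)^{\prim}=0$ there too.

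The crux --- and the reason this stays a conjecture --- is the passage from characters to modules: Theorems~\ref{thm:GR_rec} and~\ref{thm:Even_Carry} are proved purely in $A$, so a summand $F^{i}(\chi)\,s_\lambda$ of $\kappa(d,e)$ need only agree with the character of the relevant subquotient of $K(d,e)$ after semisimplification. Realizing those pieces, and the ambient Frobenius images, inside $\ker\omega$ --- equivalently, promoting the Liu--Gao--Raicu recursion to a short exact sequence of equivariant $R$-modules, and checking in particular that the multiplication maps $\SS_\lambda V\otimes F_K^{k}(\text{cocycle})\to K(d,e)$ above are injective --- is the main obstacle. For $n>3$ the difficulty is starker: with no recursion there is no character-level input at all, and one could only test the conjecture against the Prim polynomials of Section~\ref{sec:conj}, a proof presumably needing a direct analysis of $\ker\omega$ on $M$ in the spirit of Han--Monsky.
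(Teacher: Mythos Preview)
The paper does not prove this statement: it is stated as Conjecture~\ref{conj:strip} and left open, with no argument offered beyond the remark that the author and Gao--Raicu have numerical evidence and that the definition of $\Prim(m)$ is calibrated to it. There is therefore no ``paper's own proof'' to compare against.

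Your write-up is consistent with this: you do not claim a proof either, and you correctly identify the main obstruction --- the recursion of Theorem~\ref{thm:GR_rec} lives only in the character ring $A$, whereas $K(d,e)^{\prim}$ is defined at the level of the $R$-module $K$, so one would need to lift the three-term recursion to an actual filtration of $K(d,e)$ by $R\cdot\im F_K$-pieces. Your proposed filtration for $d\ge e$ is plausible in outline, but note that even the shape of the second and third pieces is speculative: the paper gives no indication that the truncated-Schur summand or the dualized summand in \eqref{eq:recur} arises as $R$-multiples of specific Frobenius images $F_K^k(K(t-1,t))$ or $F_K^k(K(t-2,t-1))$, and there is no a~priori reason the relevant $\SS_\lambda V$ should sit inside $R$ in the way you need. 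For $d<e-1$ your surjectivity-of-multiplication idea is reasonable heuristically but is not addressed anywhere in the paper. In short, your sketch goes well beyond what the paper attempts, and the gaps you flag are exactly why the statement remains a conjecture.
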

We accordingly define $\Prim(m)$ to be the character of $K^\prim(m-1,m)$ for $m \geq 1$. (For example, $\Prim(1) = 1$ because $K(0,1)$ is one-dimensional, fixed by $\SL_n$, and there are no other $K(d,e)$ that can map to it.) We have extensive numerical evidence that this is the correct generalization of both the Nim polynomials and the even-carry polynomials:
\begin{conj}\label{conj:Nim}~
  \begin{enumerate}[$($a$)$]
    \item\label{it:n=3} If $n = 3$, then $\Prim(m)$ is given by an even-carry polynomial:
    \[
      \Prim(m) = \C_p(m).
    \]
    \item\label{it:p=2} If $p = 2$, then $\Prim(m)$ is given by a Nim polynomial:
    \begin{align*}
      \Prim(2m) &= 0 \\
      \Prim(2m + 1) &= N_n(m).
    \end{align*}
  \end{enumerate}
\end{conj}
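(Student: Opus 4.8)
The plan is to study the primitive cohomology $K^\prim(m-1,m)$ directly through the Frobenius structure of the $R$-module $K$, rather than through any single closed formula, since for $n>3$ no formula for $\kappa(d,e)$ is available. The organizing principle is a \emph{primitive decomposition} of the full cohomology: the Gao--Raicu generation observation (that $K$ is generated over $R$ by characteristic-zero cocycles and their $F_K$-iterates) suggests that $K(d,e)$ carries a filtration by ``Frobenius depth,'' whose associated graded is a direct sum of pieces $F^{i}\big(K^\prim(m_i-1,m_i)\big)\otimes W_i$, where each $W_i$ is a ``free'' Schur module recording how the $R$-action spreads a depth-$i$ primitive cocycle across the relevant bidegree. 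Passing to characters would give
\[
  \kappa(d,e) \;=\; \sum_{i\ge 0} F^{i}\big(\Prim(m_i)\big)^{\pm}\cdot [W_i],
\]
where the integers $m_i$ and the Schur factors $W_i$ are read off from the base-$p$ digits of $(d,e)$ and the superscript records an occasional dualization. Both parts of the conjecture would then be extracted by matching this with the combinatorial formulas already in hand. Establishing the decomposition rigorously is inseparable from Conjecture \ref{conj:strip} (that $K^\prim$ is concentrated on the diagonal $d=e-1$), so I would prove that vanishing first; it is the structural backbone on which everything else rests.

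For part \ref{it:n=3} ($n=3$) I have the explicit formula of Theorem \ref{thm:Even_Carry} available, and I would argue by induction on $m$ that $\Prim(m)=\C_p(m)$ by matching recurrences. The base case $m<p$ is clean, because for $e=m<p$ the cohomology coincides with its characteristic-zero value, where the imprimitive part is empty, so $\Prim(m)=\kappa(m-1,m)$ and one checks this equals $\C_p(m)=s_{(m-1)}$ via Proposition \ref{prop:even_carry}\ref{ec:small}. For the inductive step, writing $m=tp^k+m'$ with $1\le t<p$ and $0\le m'<p^k$, I would produce a cohomological avatar of the even-carry recurrence of Proposition \ref{prop:even_carry}\ref{ec:rec}: the top digit $t$ contributes a factor $F^k s_{(t)}$ multiplying the lower primitive class $\Prim(m')$, while the carried part contributes the dual term $F^k s_{(t-2)}\,\Prim(p^k-1-m')^\vee$. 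Concretely, I expect to obtain this recurrence by restricting the Liu--Gao--Raicu recurrence (Theorem \ref{thm:GR_rec}) to the Frobenius filtration and reading off the depth-$0$ graded piece; the explicit formula of Theorem \ref{thm:Even_Carry} then serves as the consistency check, since its $i$th summand is exactly $F^i\C_p(\floor{e/p^i})^\vee$ times a Schur factor, and letting $\floor{e/p^i}$ range over all values forces $\Prim=\C_p$ up to the dual.

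For part \ref{it:p=2} ($p=2$) the case $n=3$ then follows from part \ref{it:n=3} together with the identity $\C_2(2m+1)=F\,N_3(m)^\vee$ of the previous subsection (which is precisely what reconciles the even-carry and Nim normalizations; the parity statement $\Prim(2m)=\C_2(2m)=0$ is immediate). The general-$n$ case is the genuinely new content and cannot use any closed formula, so I would argue directly in characteristic $2$ with the module $M$ and the operator $\omega$. The vanishing $\Prim(2m)=0$ should follow from a parity argument showing every $\omega$-cocycle in bidegree $(2m-1,2m)$ already lies in the $R$-span of $F(M)$. For $\Prim(2m+1)=N_n(m)$ I would compute the $\vec u$-weight space of $K^\prim(2m,2m+1)$ one weight at a time, exactly as individual terms of $\kappa$ are interpreted through the subspaces $M_{d,e,\vec u}$ after \eqref{eq:monsky_piece}, and show that the primitive part of this weight space is one-dimensional precisely when $\vec u$ is a Nim-zero vector with $|\vec u|_1=2m$. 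The XOR (noncarrying base-$2$) condition should emerge because, modulo the Frobenius image $F(M)$, a surviving cocycle cannot absorb any carry in base $2$; this is the Han--Monsky style analysis of the length of $\omega$-kernels, now read modulo Frobenius, and I anticipate an induction on $n$ (peeling off the pair $z_n,w_n$) to organize the carry bookkeeping.

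The main obstacle is Conjecture \ref{conj:strip} and, with it, the rigorous construction of the primitive decomposition: showing that the Schur factors $W_i$ are genuine subquotients of $K(d,e)$, and not merely a formal character identity, requires controlling the $R$-module structure of $K$, including the relations produced when a Frobenius class is multiplied by $R$ (in particular by the $w_i$, which can drop the bidegree into the primitive range). For $n=3$ the explicit formula lets one sidestep much of this by verifying a character identity, but in part \ref{it:p=2} for general $n$ there is no such crutch, and the crux is to run the weight-space, Han--Monsky computation modulo Frobenius uniformly in $n$. I expect the induction on $n$, together with the precise description of which weight spaces become imprimitive after a single Frobenius step, to be where the real work lies.
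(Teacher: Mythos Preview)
The statement you are attempting to prove is a \emph{conjecture} in the paper, not a theorem: the paper offers no proof at all.  Immediately after stating it, the author writes only that part~\ref{it:n=3} ``is likely \dots\ within reach of their methodology'' and that part~\ref{it:p=2} ``is deeper.''  So there is no paper proof against which to compare your proposal.

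What you have written is a research program, not a proof, and you already identify the main gap yourself: everything hinges on Conjecture~\ref{conj:strip} and on the existence of a Frobenius-depth filtration of $K(d,e)$ whose associated graded is a sum of pieces $F^i(K^\prim)\otimes W_i$ with $W_i$ an honest Schur subquotient.  Neither is established anywhere, and your phrase ``restricting the Liu--Gao--Raicu recurrence to the Frobenius filtration and reading off the depth-$0$ graded piece'' presupposes precisely the structural result that is in question.  Knowing Theorem~\ref{thm:Even_Carry} does not by itself prove part~\ref{it:n=3}: that theorem is a character identity for $\kappa(d,e)$, while $\Prim(m)$ is defined as the character of a specific \emph{quotient} $K(m-1,m)/\text{($R$-span of $\operatorname{im} F_K$)}$; to pass from one to the other you must show that the $i$th summand in \eqref{eq:h1_even_carry} is exactly the contribution of the $R$-span of $F_K^i$-images, not merely that the characters add up correctly.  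For part~\ref{it:p=2} with $n>3$ there is no closed formula to lean on at all, and your sketch (``parity argument'', ``Han--Monsky style analysis modulo Frobenius'', ``induction on $n$'') names the ingredients one would hope to use without supplying any of the actual arguments.
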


\begin{rem}
Both the Nim polynomials for $p = 2$ and the even-carry polynomials for $n = 3$ are \emph{multiplicity-free,} that is, are a sum of distinct monomials. This is NOT true of the $\Prim$ polynomials in general. For instance, if $n = 4$ and $p = 3$, then 
\[
\Prim(7) = \sum_{\sym} x_1^6 x_2^6 + \sum_{\sym} x_1^6x_2^3x_3^3 + \sum_{\sym} x_1^4 x_2^4 x_3^4 + \sum_{\sym} x_1^4 x_2^4 x_3^3 x_4 + \boxed{3 x_1^3 x_2^3 x_3^3 x_4^3}
\]
contains a term (boxed) with coefficient $> 1$. (Here $\sum_{\sym} f$ means the symmetric function formed by summing all distinct terms obtained by permuting the variables $x_i$ in $f$.)
\end{rem}

Although the authors of \cite{KMRR24} do not consider $K^\prim$, it is likely that Conjecture \ref{conj:Nim} is within reach of their methodology. In particular, it would be instructive to give a recursive formula for $\Prim(m)$.

\subsection{Tiles and the structure of Prim polynomials} \label{sec:tiles}

In this section, we speculate on the nature of the $\Prim$ polynomials for general $p$ and $n$.

We begin with some data. To save space, we use the following notation.
\begin{defn}
  Given a matrix
  \[
  M = \begin{bmatrix}
    a_{1k} & \cdots & a_{11} & a_{10} \\
    a_{2k} & \cdots & a_{21} & a_{20} \\
    \vdots &        & \vdots & \vdots \\
    a_{nk} & \cdots & a_{n1} & a_{n0}
  \end{bmatrix}
  \]
  of $p$-adic digits $a_{ji}$, with each column $\vec{a}_i = (a_{1i}, a_{2i}, \ldots, a_{ni})$ weakly decreasing, define the \emph{minimal Schur function}
  \[
    \MS(M) = \prod_{i = 0}^k F^i s_{\vec{a}_i}.
  \]
\end{defn}
Observe that $\MS(M)$ is a character whose highest weight is given by reading the rows of $M$ as numbers in base $p$. It is called ``minimal'' because it is the least product of Frobenius images of $s_\lambda$'s (\`a la Steinberg tensor product theorem) achieving this highest weight. Note that not every character is a sum of $\MS$ functions; among other restrictions, the highest weight must be a digit-decreasing partition in base $p$.

\begin{table}
  \begin{alignat*}{7}
    \Prim( 1_5) &=\ & \MS\begin{bsmallmatrix}0\\0\\0\\0\end{bsmallmatrix}\\
    \Prim( 2_5) &=& \MS\begin{bsmallmatrix}1\\1\\0\\0\end{bsmallmatrix}\\
    \Prim( 3_5) &=& \MS\begin{bsmallmatrix}2\\2\\0\\0\end{bsmallmatrix}\\
    \Prim( 4_5) &=& \MS\begin{bsmallmatrix}3\\3\\0\\0\end{bsmallmatrix}\\
    \Prim(10_5) &=&&&&&&& \MS\begin{bsmallmatrix}3\\3\\1\\1\end{bsmallmatrix}\\
    \Prim(11_5) &=& \MS\begin{bsmallmatrix}1&0\\1&0\\0&0\\0&0\end{bsmallmatrix} &&&&&+& \MS\begin{bsmallmatrix}3\\3\\2\\2\end{bsmallmatrix}\\
    \Prim(12_5) &=& \MS\begin{bsmallmatrix}1&1\\1&1\\0&0\\0&0\end{bsmallmatrix} &&&&&+& \MS\begin{bsmallmatrix}3\\3\\3\\3\end{bsmallmatrix}\\
    \Prim(13_5) &=& \MS\begin{bsmallmatrix}1&2\\1&2\\0&0\\0&0\end{bsmallmatrix}\\
    \Prim(14_5) &=& \MS\begin{bsmallmatrix}1&3\\1&3\\0&0\\0&0\end{bsmallmatrix}\\
    \Prim(20_5) &=&&& \MS\begin{bsmallmatrix}1&3\\1&0\\1&0\\0&0\end{bsmallmatrix} &&&+& \MS\begin{bsmallmatrix}1&3\\1&3\\0&1\\0&1\end{bsmallmatrix}\\
    \Prim(21_5) &=& \MS\begin{bsmallmatrix}2&0\\2&0\\0&0\\0&0\end{bsmallmatrix} &+& \MS\begin{bsmallmatrix}1&3\\1&1\\1&1\\0&0\end{bsmallmatrix} &-& \MS\begin{bsmallmatrix}1&2\\1&1\\1&1\\0&1\end{bsmallmatrix} &+& \MS\begin{bsmallmatrix}1&3\\1&3\\0&2\\0&2\end{bsmallmatrix} &+& \MS\begin{bsmallmatrix}1&0\\1&0\\1&0\\1&0\end{bsmallmatrix}\\
    \Prim(22_5) &=& \MS\begin{bsmallmatrix}2&1\\2&1\\0&0\\0&0\end{bsmallmatrix} &+& \MS\begin{bsmallmatrix}1&3\\1&2\\1&2\\0&0\end{bsmallmatrix} &-& \MS\begin{bsmallmatrix}1&2\\1&2\\1&2\\0&1\end{bsmallmatrix} &+& \MS\begin{bsmallmatrix}1&3\\1&3\\0&3\\0&3\end{bsmallmatrix} &+& \MS\begin{bsmallmatrix}1&1\\1&1\\1&0\\1&0\end{bsmallmatrix}\\
    \Prim(23_5) &=& \MS\begin{bsmallmatrix}2&2\\2&2\\0&0\\0&0\end{bsmallmatrix} &+& \MS\begin{bsmallmatrix}1&3\\1&3\\1&3\\0&0\end{bsmallmatrix} &&&&&+& \MS\begin{bsmallmatrix}1&2\\1&2\\1&0\\1&0\end{bsmallmatrix}\\
    \Prim(24_5) &=& \MS\begin{bsmallmatrix}2&3\\2&3\\0&0\\0&0\end{bsmallmatrix} &&&&&&&+& \MS\begin{bsmallmatrix}1&3\\1&3\\1&0\\1&0\end{bsmallmatrix}\\
    \Prim(30_5) &=&&& \MS\begin{bsmallmatrix}2&3\\2&0\\1&0\\0&0\end{bsmallmatrix} &&&+& \MS\begin{bsmallmatrix}2&3\\2&3\\0&1\\0&1\end{bsmallmatrix} &&&+& \MS\begin{bsmallmatrix}1&3\\1&3\\1&1\\1&1\end{bsmallmatrix}\\
    \Prim(31_5) &=& \MS\begin{bsmallmatrix}3&0\\3&0\\0&0\\0&0\end{bsmallmatrix} &+& \MS\begin{bsmallmatrix}2&3\\2&1\\1&1\\0&0\end{bsmallmatrix} &-& \MS\begin{bsmallmatrix}2&2\\2&1\\1&1\\0&1\end{bsmallmatrix} &+& \MS\begin{bsmallmatrix}2&3\\2&3\\0&2\\0&2\end{bsmallmatrix} &+& \MS\begin{bsmallmatrix}2&0\\2&0\\1&0\\1&0\end{bsmallmatrix} &+& \MS\begin{bsmallmatrix}1&3\\1&3\\1&2\\1&2\end{bsmallmatrix}\\
    \Prim(32_5) &=& \MS\begin{bsmallmatrix}3&1\\3&1\\0&0\\0&0\end{bsmallmatrix} &+& \MS\begin{bsmallmatrix}2&3\\2&2\\1&2\\0&0\end{bsmallmatrix} &-& \MS\begin{bsmallmatrix}2&2\\2&2\\1&2\\0&1\end{bsmallmatrix} &+& \MS\begin{bsmallmatrix}2&3\\2&3\\0&3\\0&3\end{bsmallmatrix} &+& \MS\begin{bsmallmatrix}2&1\\2&1\\1&0\\1&0\end{bsmallmatrix} &+& \MS\begin{bsmallmatrix}1&3\\1&3\\1&3\\1&3\end{bsmallmatrix}\\
    \Prim(33_5) &=& \MS\begin{bsmallmatrix}3&2\\3&2\\0&0\\0&0\end{bsmallmatrix} &+& \MS\begin{bsmallmatrix}2&3\\2&3\\1&3\\0&0\end{bsmallmatrix} &&&&&+& \MS\begin{bsmallmatrix}2&2\\2&2\\1&0\\1&0\end{bsmallmatrix}\\
    \Prim(34_5) &=& \MS\begin{bsmallmatrix}3&3\\3&3\\0&0\\0&0\end{bsmallmatrix} &&&&&&&+& \MS\begin{bsmallmatrix}2&3\\2&3\\1&0\\1&0\end{bsmallmatrix}\\
    \Prim(40_5) &=&&& \MS\begin{bsmallmatrix}3&3\\3&0\\1&0\\0&0\end{bsmallmatrix} &&&+& \MS\begin{bsmallmatrix}3&3\\3&3\\0&1\\0&1\end{bsmallmatrix} &&&+& \MS\begin{bsmallmatrix}2&3\\2&3\\1&1\\1&1\end{bsmallmatrix}
  \end{alignat*}
  \caption{Generalized Nim polynomials $\Prim(m)$ for $p = 5$, $n = 4$. The inputs $m$ are written in base $5$ to clarify the patterns.}
  \label{tab:nim_data_p=5}
\end{table}

For $n = 4$, $p = 5$, which we deem a sufficiently general choice of values, we compute the first $20$ $\Prim$ polynomials and find they are all given by minimal Schur decompositions, as shown in Table \ref{tab:nim_data_p=5}. (We scale the terms for $\Prim(m)$ to have degree $2m - 2$; otherwise the highest weights are defined only up to adding $(1,\ldots,1)$.) Moreover, we have aligned the terms to manifest that the whole table is given by the formula
\begin{equation}
  \begin{aligned}
  \Prim(pd_1 + d_0) &= \MS\begin{bsmallmatrix}
    d_1 & d_0 - 1 \\
    d_1 & d_0 - 1 \\
    0 & 0   \\
    0 & 0
  \end{bsmallmatrix}
  + \MS\begin{bsmallmatrix}
    d_1 - 1 & p - 2 \\
    d_1 - 1 & d_0 \\
    1 & d_0 \\
    0 & 0
  \end{bsmallmatrix}
  - \MS\begin{bsmallmatrix}
    d_1 - 1 & p - 3 \\
    d_1 - 1 & d_0 \\
    1 & d_0 \\
    0 & 1
  \end{bsmallmatrix}\\
  &\quad{} + \MS\begin{bsmallmatrix}
    d_1 - 1 & p-2 \\
    d_1 - 1 & p-2 \\
    0 & d_0 + 1 \\
    0 & d_0 + 1
  \end{bsmallmatrix}
  + \MS\begin{bsmallmatrix}
    d_1 - 1 & d_0 - 1 \\
    d_1 - 1 & d_0 - 1 \\
    1 & 0   \\
    1 & 0
  \end{bsmallmatrix}
  + \MS\begin{bsmallmatrix}
    d_1 - 2 & p-2 \\
    d_1 - 2 & p-2 \\
    1 & d_0 + 1 \\
    1 & d_0 + 1
  \end{bsmallmatrix}.
  \end{aligned}
  \label{eq:Nim_formula}
\end{equation}
Here $\MS$ terms where the columns are not weakly decreasing are interpreted as zero, as in the definition of Schur functions above, to account for the gaps in Table \ref{tab:nim_data_p=5}. Similarly, going up to $m < p^3$, we find a formula consisting of $36$ three-column $\MS$ decompositions. Observe that \eqref{eq:Nim_formula} has notable internal structure. For instance, the first and fifth terms, as well as the fourth and sixth, are related by transforming the first column according to the rule
\[
  \begin{bsmallmatrix}
    k\\k\\0\\0
  \end{bsmallmatrix} \mapsto
  \begin{bsmallmatrix}
    k-1\\k-1\\1\\1
  \end{bsmallmatrix}.
\]
Likewise, the second and third terms are related by changing two entries in the last column and flipping the sign. As we collect more and more data, we begin to surmise that $\Prim(m)$ is a linear combination of $\MS$ functions applied to matrices with restricted columns and adjacent column pairs. This motivates the following definition.

\begin{defn}
  A \emph{tile} $T$ is a tuple $(c_{\OUT}, c_{\IN}, v, \epsilon)$ consisting of the following data:
  \begin{itemize}
    \item An \emph{out-carry} $c_{\OUT}(T) \in \{0, \ldots, n-2\}$;
    \item An \emph{in-carry} $c_{\IN}(T) \in \{0, \ldots, n-2, \END\}$, where ``$\END$'' is a special symbol;
    \item A vector $v(T) = (a_1, \ldots, a_n)$, with $a_i = a_i(T) \in \ZZ$ and $p - 1 \geq a_1 \geq \ldots \geq a_n \geq 0$;
    \item A coefficient $\epsilon(T) \in \ZZ$. (In all known examples, $\epsilon(T) = \pm 1$, but it is risky to conjecture this in general from our limited data.)
  \end{itemize}
\end{defn}

\begin{conj} \label{conj:exist tiles}
  For fixed $n$ and $p$, there is a finite list of valid tiles such that the $\Prim$ polynomials $\Prim(d)$ have the form
  \[
    \sum_{\text{tiles } T_0, T_1,\ldots} \prod_i \epsilon(T_i) F^i s_{v(T_i)}
  \]
  where the sum ranges over all sequences of valid tiles $T_0, T_1,\ldots$ with the following properties:
  \begin{enumerate}
    \item All but finitely many of the $T_i$'s are the zero tile
    \[
      \mathbf{0} = (0, 0, (0, \ldots, 0), 1).
    \]
    \item The carries are compatible:
    \[
      c_{\OUT}(T_{i}) = c_{\IN}(T_{i+1}),
    \]
    and $c_{\IN}(T_0) = \END$ (so $\END$ marks tiles that can only go in the $T_0$ position)
    \item The degree is correct:
    \[
      \sum_{i} \sum_j p^i a_j(T_i) = 2d - 2.
    \]
  \end{enumerate}
\end{conj}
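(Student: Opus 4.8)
\begin{rem}
We outline a possible route toward Conjecture~\ref{conj:exist tiles}. The plan is to run, for general $n$, the three-move argument that proved Theorem~\ref{thm:Even_Carry}: (i) a recursion expressing $\Prim(m)$ in terms of $\Prim$'s of smaller argument together with an explicit Schur-type term; (ii) a verification that the proposed tile sum satisfies the same recursion; (iii) induction on $m$. Since the recursion (Theorem~\ref{thm:GR_rec}, for $n=3$) is the one ingredient genuinely missing in general, the substance of the proof is to produce it and to recognize its ``state space'' as the set of tiles.

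The first step is to prove Conjecture~\ref{conj:strip}, without which $\Prim$ carries no information about $\kappa$. Using the module description $K = M/\omega M$ with $F_K = \omega^{p-1}\circ F$ from~\eqref{eq:monsky_piece}, one wants to show that for $d \neq e-1$ every $\omega$-cycle in $M_{d,e-1}$ lies in the $R$-span of $F(K)$. I would filter $M$ by powers of $\omega$ and study the associated graded through the Frobenius-twisted Koszul resolution of $D^d\R$ used by Gao--Raicu; the relevant homology computes $K^{\prim}(d,e)$, and the claim is that it vanishes off the diagonal strip $d = e-1$. I expect this to be the single hardest step, as it is a higher-rank, positive-characteristic acyclicity statement with no characteristic-$0$ shadow.

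Granting this, the next step is a digit recursion. Writing $m = tp^k + m'$ with $1 \le t < p$ and $0 \le m' < p^k$, I expect the top Frobenius layer of the resolution to contribute a single factor $F^k s_{\vec a}$ for a weakly decreasing digit vector $\vec a$ determined by $t$ and by a ``carry'' passed into the layers below, leaving a bounded $\ZZ$-combination of $\Prim$'s and minimal Schur functions $\MS$ in variables of size $< p^k$; for $n=3$ this must specialize to Proposition~\ref{prop:even_carry}\ref{ec:rec}, with the two carry values $\{0,2\}$ replaced by carries in $\{0,\dots,n-2\}$ and the symbol $\END$ marking the bottom layer (the analogue of the ``$+1$'' in the even-carry condition). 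Iterating from the top digit downward, each application contributes one factor $\epsilon\,F^i s_v$ and passes a carry to the next layer; because the carry lives in the finite set $\{0,\dots,n-2,\END\}$ and $v$ is a weakly decreasing vector of base-$p$ digits, only finitely many quadruples $(c_{\OUT},c_{\IN},v,\epsilon)$ occur. That finite set is the tile list, and the compatibility, finiteness, and degree conditions in Conjecture~\ref{conj:exist tiles} are precisely the conditions for a sequence of layers to assemble into a legal run of the recursion; one would then conclude by induction on $m$, as in the proof of Theorem~\ref{thm:Even_Carry}.

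Beyond the acyclicity step, the obstacles are all in the bookkeeping for $n > 3$: the variety $X$ is then the partial flag variety $\mathrm{Fl}(1,n-1;n)$ rather than the full flag variety, the resolution of $D^d\R$ has $n$ terms, and the ``truncated Schur'' building blocks of Proposition~\ref{prop:trunc} must be replaced by higher-rank analogues whose well-behaved ranges are not yet identified. The minus signs in Table~\ref{tab:nim_data_p=5}, and the coefficient $3$ in $\Prim(7)$ for $n=4$, $p=3$, show that the recursion does not merely add monomials, so one should expect only $\epsilon(T)\in\ZZ$ in general. A sensible staged attack is to first settle $p=2$ for all $n$ --- there carries are forced to vanish, exactly as in the compatibility proposition above, so the tile list collapses and Conjecture~\ref{conj:Nim}\ref{it:p=2} is essentially the whole content --- and then $n=4$ for all $p$, where the data of Table~\ref{tab:nim_data_p=5} and formula~\eqref{eq:Nim_formula} already exhibit the expected shape.
\end{rem}
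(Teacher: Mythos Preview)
The statement you were asked to prove is a \emph{conjecture}; the paper does not prove it. The paper's support for Conjecture~\ref{conj:exist tiles} consists entirely of numerical evidence: the explicit tile lists in Tables~\ref{tab:tiles_n=3}, \ref{tab:tiles_n=4}, and \ref{tab:tiles_n=6}, together with the observation that the even-carry polynomials (for $n=3$) and the Nim polynomials (for $p=2$) fit the tile framework. There is no proof in the paper to compare your attempt against.

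Your submission is also not a proof, and you say so explicitly: it is a \texttt{rem} environment outlining a ``possible route'' and listing the obstacles. As a research plan it is sensible --- you correctly identify Conjecture~\ref{conj:strip} as a prerequisite, you correctly model the desired argument on the $n=3$ proof of Theorem~\ref{thm:Even_Carry}, and you correctly flag the absence of a general-$n$ analogue of the Liu--Gao--Raicu recursion (Theorem~\ref{thm:GR_rec}) as the central missing ingredient. But a plan is not a proof: the acyclicity step you call ``the single hardest step'' is genuinely open, and the existence of a finite-state digit recursion for $\Prim(m)$ when $n>3$ is exactly what the conjecture asserts, so invoking it is circular until someone actually produces and proves such a recursion. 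Your outline does not close that circle.

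In short: there is nothing to compare, because neither you nor the paper proves the statement. If the assignment was to supply a proof, the honest answer is that none is known.
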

For $n = 3$, the even-carry polynomials satisfy Conjecture \ref{conj:exist tiles} with four families of tiles, listed in Table \ref{tab:tiles_n=3}. Here it is to be noted that the in and out carries of $0$ and $1$ correspond to the carries of $0$ and $2$ in the even-carry multiset, with each $\MS$ term corresponding to a specific carry structure.

For $n = 4$ there appear to be $21$ families of tiles, listed in Table \ref{tab:tiles_n=4}. In these tables, $k$ can be any integer such that $0 \leq k \leq p - 1$ (or $0 \leq k \leq p - 2$, if $c_{\IN} = \END$) and such that the tile entries $a_i$ are weakly decreasing. For example, the lone negative term
\[
  -{\MS\begin{bsmallmatrix}
    d_1 - 1 & p - 2 \\
    d_1 - 1 & d_0 \\
    1 & d_0 \\
    0 & 1
  \end{bsmallmatrix}}
\]
of \eqref{eq:Nim_formula} arises by joining the tiles
\[
  T_0 = (1, 0, (p-3,k,k,1), -1), \quad T_1 = (0, 1, (k,k,1,0), 1), \quad T_i = \mathbf{0} \text{ for } i \geq 2
\]
of Table \ref{tab:tiles_n=4} so that the in-carry $c_\IN = 1$ of the first matches the out-carry $c_\OUT$ of the second. The negative sign on this term comes from the tile $T_1$: $\epsilon(T_1) = -1$.

For $p = 2$, the Nim polynomials also have a tile structure; in other words, Conjecture \ref{conj:Nim}\ref{it:p=2} satisfies Conjecture \ref{conj:exist tiles}. It is unclear how we should assign carry values $c_i$. In the simplest arrangement, the valid tiles are
\[
  \big(0, 0, (\overbrace{1, \ldots, 1}^{2c}, \overbrace{0, \ldots, 0}^{n - 2c}), 1\big), \quad 0 \leq c \leq n/2,
\]
and $\(0, \END, (0,\ldots,0), 1\)$. (In order to satisfy Conjecture \ref{conj:tile_patterns}\ref{it:dual} below, we would have to posit the existence of additional tiles that are inaccessible because $c_\IN$ can never become nonzero.)

Based on the observed structure in tabulated tiles, we conjecture:
\begin{conj} \label{conj:tile_patterns}~
  \begin{enumerate}[$($a$)$]
    \item\label{it:incr} (Incrementing $n$) If $(c_{\OUT}, c_{\IN}, (a_1, \ldots, a_n), \epsilon)$ is a valid tile for a given $n$, then so is 
    \[
      (c_{\OUT}, c_{\IN}, (a_1, \ldots, a_n, 0), \epsilon)
    \]
    for $n$ replaced by $n + 1$.
    \item\label{it:dual} (Duality) If
    \[
      (c_{\OUT}, c_{\IN}, (a_1, \ldots, a_n), \epsilon)
    \]
    is a valid tile with $c_{\IN} \in \ZZ$, so is
    \[
      (n - 2 - c_{\OUT}, n - 2 - c_{\IN}, (p - 1 - a_n, \ldots, p - 1 - a_1), \epsilon);
    \]
    and if
    \[
      (c_{\OUT}, \END, (a_1, \ldots, a_n), \epsilon)
    \]
    is a valid tile, so is
    \[
      (n - 2 - c_{\OUT}, \END, (p - 2 - a_n, \ldots, p - 2 - a_1), \epsilon).
    \]
  \end{enumerate}
\end{conj}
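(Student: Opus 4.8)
The plan is to obtain both parts as shadows of symmetries of the primitive cohomology module $K^\prim$, working under Conjecture \ref{conj:exist tiles} so that ``valid tile'' has meaning, and treating the tile list as the canonical (minimal) one — which, just as the recursion behind Theorem \ref{thm:Even_Carry} is controlled by finitely many values, should be pinned down by the $\Prim(m)$ with $m$ below an explicit bound.

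For part \ref{it:incr} I would exploit functoriality of $\mu_{d,e}$, hence of $K(d,e)=\ker\mu_{d,e}$, in the vector space $V$. A hyperplane $V_n\hookrightarrow V_{n+1}$ induces $\SL_n\hookrightarrow\SL_{n+1}$, and in the model \eqref{eq:monsky_piece} the $x_{n+1}$-weight-zero part of $M^{(n+1)}_{d,e-1}$ is $\bigoplus_{c\ge 0}M^{(n)}_{d-c,e-1-c}$, on which the two summands $\omega^{(n)}$ and $z_{n+1}w_{n+1}$ of $\omega^{(n+1)}$ land in different weight spaces; chasing this shows the $x_{n+1}$-weight-zero part of $K^{(n+1)}(d,e)$ is exactly $K^{(n)}(d,e)$. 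I would then argue that a primitive cocycle for $\SL_n$, transported into this part of $K^{(n+1)}(d,e)$, remains primitive — the key point being that the $R^{(n+1)}$-span of $\im F_K^{(n+1)}$ meets the $x_{n+1}$-weight-zero part only in the image of the $R^{(n)}$-span of $\im F_K^{(n)}$ — so that its tile decomposition carries over with an extra $0$ appended (the $(n+1)$st digit being uniformly $0$ because $a_{n+1}=b_{n+1}=0$). At the level of characters the clean identity $\kappa^{(n)}(d,e)=\kappa^{(n+1)}(d,e)|_{x_{n+1}=0}$ (setting a variable to $0$ being a ring homomorphism, this is easy to track through the $\MS$-construction, where $F^i s_{\vec a}$ specializes to its $n$-variable analogue when $\vec a$ ends in $0$ and to $0$ otherwise) serves both as a sanity check and, combined with the conjectural expression of $\kappa$ via the $\Prim$'s, as an alternative route; note that for $p=2$ the inclusion $\mathcal T_n\hookrightarrow\mathcal T_{n+1}$ is immediate from $[x_{n+1}^0]N_{n+1}(m)=N_n(m)$.

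For part \ref{it:dual} I would use Serre duality on $X$ together with the interchange of the two projective factors. Serre duality identifies $K(d,e)=H^{n-1}(X,\L_{d,e})$ with the dual of a cohomology group which, via the two-term resolution of $S^d\R^\vee$ coming from $0\to\OO(-1)\to V^*\otimes\OO\to\R^\vee\to 0$, is again of $\mu$-kernel type; tracking the $\SL_n$-action (using $V^\vee\cong\Lambda^{n-1}V$) yields an involution of the family $\{K^\prim\}$ inducing the $\vee$-involution of $A$ on characters. Since $\vee$ is compatible with $\MS$ — for a digit matrix $M$ with weakly decreasing columns one has $\MS(M)^\vee=\MS(\overline M)$ in $A$, where $\overline M$ has $(j,i)$-entry $p-1-M_{n+1-j,i}$, because $s_{\vec a}^\vee=s_{(p-1-a_n,\dots,p-1-a_1)}$ for $0\le a_n\le\dots\le a_1\le p-1$ and $F^i$ commutes with $\vee$ — applying this to a tile decomposition of $\Prim(m)$ turns each contributing sequence $T_0,T_1,\dots$ into the sequence of complemented tiles $(n-2-c_{\OUT}(T_i),\,n-2-c_{\IN}(T_i),\,(p-1-a_n(T_i),\dots,p-1-a_1(T_i)),\,\epsilon(T_i))$ contributing the $\vee$ of the original term. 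Carry compatibility survives because $c\mapsto n-2-c$ is an involution of $\{0,\dots,n-2\}$ and complementation reverses the propagation direction, and the $\END$-marking survives in the column $i=0$: the auxiliary carry $c_0=1$ that accounts for the ``$+1$'' in $a_1+\dots+a_n+1=m$ complements in a window one shorter, which is exactly why $p-1$ becomes $p-2$ — compare the $n=3$ computation inside the proof of Proposition \ref{prop:even_carry}\ref{ec:rec}, where all carries are subtracted from $n-1=2$. Matching against the list supplied by Conjecture \ref{conj:exist tiles} then forces the minimal list to be closed under complementation.

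I expect the main obstacle in both parts to be the descent to $K^\prim$ — controlling how the $R$-span of $\im F_K$ interacts with the operations involved. In \ref{it:incr} the delicate point is that $w_{n+1}$ lowers the grading in which the $x_{n+1}$-weight-zero part sits, so a priori the $R^{(n+1)}$-span could reach that part ``from above'' and create new relations; one must show it introduces no new primitive elements there. In \ref{it:dual} one must verify that the Serre-duality isomorphism carries the Frobenius tower $F_K$ to itself up to the expected reindexing, so that primitivity is a self-dual notion. A secondary, bookkeeping obstacle is that Conjecture \ref{conj:exist tiles} does not pin down the tile list uniquely; the clean remedy is to prove the stronger statement that the minimal valid list is closed under both operations, which also reconciles the finite tail of $\mathbf 0$'s demanded by Conjecture \ref{conj:exist tiles} with the tail of complemented zero tiles produced by \ref{it:dual}.
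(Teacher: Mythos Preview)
The statement you are addressing is labeled a \emph{conjecture} in the paper, and the paper offers no proof of it whatsoever: it is put forward purely on the basis of the numerical evidence in Tables~\ref{tab:tiles_n=3}--\ref{tab:tiles_n=6}. There is therefore no ``paper's own proof'' against which to compare your proposal.

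Viewed on its own merits, what you have written is a heuristic strategy rather than a proof, and you are candid about this. Two structural issues deserve emphasis. First, the entire argument is conditional on Conjecture~\ref{conj:exist tiles}, which is itself open; without it the phrase ``valid tile'' has no mathematical content, and your closing remedy (prove closure of the \emph{minimal} list) presupposes both existence and uniqueness of such a list, neither of which the paper establishes. Second, the step you yourself flag --- descent to $K^{\prim}$ --- is not a technicality but the heart of the matter. For part~\ref{it:incr}, your computation that the $x_{n+1}$-weight-zero piece of $K^{(n+1)}(d,e)$ recovers $K^{(n)}(d,e)$ is correct, but the claim that the $R^{(n+1)}$-span of $\im F_K^{(n+1)}$ meets that piece only in the image of the $R^{(n)}$-span is unsupported; monomials in $z_{n+1},w_{n+1}$ of equal degree can land back in weight zero, so this genuinely needs an argument. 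For part~\ref{it:dual}, Serre duality plus the factor swap does not give an involution on a single $K^{\prim}(m-1,m)$ but rather relates different values of $m$ (compare the appearance of $\C_p(p^k-1-m)^\vee$ in Proposition~\ref{prop:even_carry}\ref{ec:rec}); your sketch elides how this reindexing interacts with the tile formalism, and in particular why the $\END$ column should transform with the shifted complement $p-2$ rather than $p-1$ beyond the $n=3$ analogy.

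In short: there is nothing to compare, and your outline is a plausible conceptual explanation of why one should \emph{expect} the conjecture to hold, but it does not constitute a proof and the paper makes no claim that one is available.
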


\begin{table}[phtb]
  \begin{tabular}{cccc}
    $c_{\OUT}$ & $c_{\IN}$ & $v$ & $\epsilon$ \\ \hline
    $0$ & $0,\END$ & $(k,k,0)$ & $1$ \\
    $0$ & $1$ & $(k,k,1)$ & $1$ \\
    $1$ & $0,\END$ & $(p-2,k,k)$ & $1$ \\
    $1$ & $1$ & $(p-1,k,k)$ & $1$
  \end{tabular}
  \caption{Tiles for $n = 3$ (the case of even-carry polynomials)}
  \label{tab:tiles_n=3}
\end{table}

\begin{table}[phtb]
  \begin{tabular}{cccc}
    $c_{\OUT}$ & $c_{\IN}$ & $v$ & $\epsilon$ \\ \hline
    $0$ & $0,\END$ & $(k,k,0,0)$ & $1$ \\
    $0$ & $0$ & $(k-1,k-1,1,1)$ & $1$ \\
    $0$ & $1$ & $(k,k,1,0)$ & $1$ \\
    $0$ & $2$ & $(k-1,k-1,0,0)$ & $1$ \\
    $0$ & $2$ & $(k-2,k-2,1,1)$ & $1$ \\
    
    $1$ & $0,\END$ & $(p-2,k,k,0)$ & $1$ \\
    $1$ & $0,\END$ & $(p-3,k,k,1)$ & $-1$ \\
    $1$ & $0$ & $(p-1,k,k,1)$ & $1$ \\
    $1$ & $0$ & $(p-2,k,k,2)$ & $-1$ \\
    $1$ & $1$ & $(p-1,k,k,0)$ & $1$ \\
    $1$ & $1$ & $(p-3,k,k,2)$ & $-1$ \\
    
    $1$ & $2$ & $(p-2,k,k,0)$ & $1$ \\
    $1$ & $2$ & $(p-3,k,k,1)$ & $-1$ \\
    $1$ & $2$ & $(p-1,k,k,1)$ & $1$ \\
    $1$ & $2$ & $(p-2,k,k,2)$ & $-1$ \\
    
    $2$ & $0$ & $(p-1,p-1,k+1,k+1)$ & $1$ \\
    $2$ & $0$ & $(p-2,p-2,k+2,k+2)$ & $1$ \\
    $2$ & $\END$ & $(p-2,p-2,k+1,k+1)$ & $1$ \\
    $2$ & $1$ & $(p-1,p-2,k+1,k+1)$ & $1$ \\
    $2$ & $2$ & $(p-1,p-1,k,k)$ & $1$ \\
    $2$ & $2$ & $(p-2,p-2,k+1,k+1)$ & $1$
  \end{tabular}
\caption{Tiles for $n = 4$}
  \label{tab:tiles_n=4}
\end{table}

\begin{table}[phtb]
  \begin{tabular}{cccc}
    $c_{\OUT}$ & $c_{\IN}$ & $v$ & $\epsilon$ \\ \hline
    
    $2$ & $\END$ & $(p-2,p-2,k+1,k+1,0,0)$ & $1$ \\
    $2$ & $\END$ & $(p-2,p-3,k+1,k+1,1,0)$ & $-1$ \\
    $2$ & $\END$ & $(p-3,p-3,k+1,k+1,2,0)$ & $1$ \\
    $2$ & $\END$ & $(p-2,p-4,k+1,k+1,1,1)$ & $1$ \\
    $2$ & $\END$ & $(p-3,p-4,k+1,k+1,2,1)$ & $-1$ \\
    $2$ & $\END$ & $(p-4,p-4,k+1,k+1,2,2)$ & $1$
  \end{tabular}
  \caption{Tiles for $n = 6$ with $c_{\OUT} = 2$, $c_{\IN} = \END$}
  \label{tab:tiles_n=6}
\end{table}
In Table \ref{tab:tiles_n=6}, we list the tiles found for $n = 6$ having $c_{\OUT} = 2$, $c_{\IN} = \END$, values for which we believe that the list of such tiles is complete. Examination of the tiles found shows that they conform to a uniform pattern
\[
  (2, \END, (p - 2 - \lambda^\top_2, p - 2 - \lambda^\top_1, k+1, k+1, \lambda_1, \lambda_2), (-1)^{\size{\lambda}})
\]
where $\lambda$ and $\lambda^\top$ are conjugate partitions with at most two parts (padded with zeros to have exactly two parts). We conjecture that all the $\END$ tiles are parametrized in this way:
\begin{conj} \label{conj:tiles end}
  The tiles with $c_{\IN} = \END$ are those of the form
  \[
    (c, \END, (p - 2 - \lambda^\top_c, \ldots, p - 2 - \lambda^\top_1, k+c-1, k+c-1, \lambda_1, \ldots, \lambda_{n - 2 - c}), (-1)^{\size{\lambda}})
  \]
  where $\lambda$ is a partition with at most $n - 2 - c$ parts ($c = c_{\OUT}$) whose conjugate $\lambda^\top$ has at most $c$ parts.
\end{conj}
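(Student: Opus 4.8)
The statement only has content once Conjecture~\ref{conj:exist tiles} is granted, so the plan is: \emph{assume} a finite tile decomposition of $\Prim$ exists, and then pin down which tiles can occupy the $T_0$ slot. It is convenient to read the decomposition as a transfer-matrix sum --- the coefficient of a monomial in $\Prim(d)$ is a sum over walks $T_0\to T_1\to\cdots$ whose first step $T_0$ carries the special in-label $\END$, whose consecutive steps are joined by the matching rule $c_{\OUT}(T_i)=c_{\IN}(T_{i+1})$, and which are eventually constant at the zero tile. Thus the $\END$ tiles are exactly the ``first-step'' data of these walks, and to recover them one must (i) understand the interior transitions --- the non-$\END$ tiles, which are already tied to the truncated Schur functions $s^{(p^i)}$ and to the multiplicative recursion of Proposition~\ref{prop:even_carry}\ref{ec:rec}, and which are tabulated for $n=3,4$ --- and (ii) isolate, for $d$ with few base-$p$ digits, the part of $\Prim(d)$ contributed by $T_0$.

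The base case $1\le d<p$ pins down the $c=0$, $\lambda=\mu=\varnothing$ tile: the known formulas (the ``$e<p$'' case, Proposition~\ref{prop:even_carry}\ref{ec:small} for $n=3$, Table~\ref{tab:nim_data_p=5} for $n=4$) give a single $\MS$-term, whose column is $(d-1,d-1,0,\ldots,0)$ after the standard normalization. To reach the $\END$ tiles with $c_{\OUT}=c>0$ I would compute $\Prim(d)$ for $d<p^2$, then $d<p^3$, using whatever analogue of the Liu--Gao--Raicu recurrence (Theorem~\ref{thm:GR_rec}) is available for general $n$; for each value of the out-carry $c$ handed to $T_1$, divide out the now-known ``tail'' $\prod_{i\ge1}\epsilon(T_i)F^i s_{v(T_i)}$, and read off the residual $T_0$-layer. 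Showing that this residual stabilizes as the number of digits grows is what would yield the claimed finite, $d$-independent list.

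The real content is to explain \emph{why} the $\END$ tiles have the conjugate-partition-with-sign shape
\[
  (c,\END,\,(p-2-\mu_c,\ldots,p-2-\mu_1,\ k+c-1,\,k+c-1,\ \lambda_1,\ldots,\lambda_{n-2-c}),\,(-1)^{\size{\lambda}}),\qquad\mu=\lambda'.
\]
I expect this to be an Euler-characteristic (Koszul) phenomenon: at the finest scale $K^{\prim}$ should be the homology of a complex built from $\Lambda^\bullet(U\otimes W)$, where $U$ (dimension $c$) and $W$ (dimension $n-2-c$) are the ``upper'' and ``lower'' blocks of the ambient weight --- the two middle coordinates $k+c-1,k+c-1$ being the rigid ``pinch'' that separates them, so that $c+(n-2-c)+2=n$ --- and the alternating sum of its terms is the dual Cauchy identity
\[
  \sum_{\lambda}(-1)^{\size{\lambda}}\,s_\lambda(U)\,s_{\lambda'}(W)\ =\ \prod_{i,j}(1-u_iw_j),
\]
which forces conjugate pairs $(\lambda,\mu=\lambda')$ with exactly the sign $(-1)^{\size{\lambda}}$. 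The part-count bounds are then the evident rank constraints $\ell(\lambda)\le\dim W=n-2-c$ and $\ell(\mu)=\lambda_1'\le\dim U=c$, while the shifts by $p-2$ on the $U$-block and by $k+c-1$ at the pinch are bookkeeping of the carry and of the base-case weight. Conjecture~\ref{conj:tile_patterns}\ref{it:dual} would follow formally, since $\lambda\mapsto\lambda'$ combined with $a\mapsto p-2-a$ swaps the roles of $U$ and $W$; and Conjecture~\ref{conj:tile_patterns}\ref{it:incr} corresponds to enlarging $W$ by a one-dimensional summand.

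The principal obstacle is Conjecture~\ref{conj:exist tiles} itself: without a proof that $\Prim(d)$ admits \emph{any} finite tile decomposition the objects in the statement are not defined, and establishing one for $n\ge5$ appears to require a genuinely new structural input on $K^{\prim}$, where even the analogue of Theorem~\ref{thm:GR_rec} is not known. A secondary but substantial difficulty is that the peeling-off in step (ii) involves true cancellation --- Table~\ref{tab:nim_data_p=5} exhibits honest minus signs among $\MS$-terms, and (see the remark after Conjecture~\ref{conj:Nim}) $\Prim$ need not even be multiplicity-free --- so separating the $T_0$-layer from the tail must be carried out with the Koszul bookkeeping above made precise, not by a positivity or leading-term shortcut.
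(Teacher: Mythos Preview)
The statement you are addressing is a \emph{conjecture}, not a theorem: the paper does not prove it and does not claim to. Its entire support in the paper is empirical --- the tiles for $n=3,4$ (Tables~\ref{tab:tiles_n=3} and~\ref{tab:tiles_n=4}) and the partial list for $n=6$ (Table~\ref{tab:tiles_n=6}) are computed from data and observed to fit the stated pattern, after which the paper simply remarks that conjugate partitions have appeared in related problems before. There is therefore no proof in the paper to compare your proposal against.

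Your proposal is not a proof either, and to your credit you say so explicitly. You correctly identify the two genuine obstructions: first, that the very objects in the statement are only defined conditionally on Conjecture~\ref{conj:exist tiles}, which is open; second, that for $n\ge 4$ no analogue of the recursion of Theorem~\ref{thm:GR_rec} is available, so the ``peel off $T_0$ from the tail'' step has no engine driving it. The dual Cauchy/Koszul heuristic you offer for the conjugate-partition-with-sign shape is a reasonable guess at the mechanism --- and is in the same spirit as the paper's own allusion to prior appearances of conjugate partitions --- but the complex you posit, the identification of its homology with $K^{\prim}$, and the splitting into blocks $U$ and $W$ of the asserted dimensions are all speculative. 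As written, this is a strategy sketch rather than a proof, and it does not go beyond what the paper itself leaves open; it should be labeled accordingly.
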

This will not be the first time that conjugate partitions found their way into an algebraic geometry problem of this flavor (see  \cite{kenkel2019lengths,Raicu2018Regularity}).

\subsection{The structure of general \texorpdfstring{$\kappa(d,e)$}{κ(d,e)}}
\label{sec:last}

Due to the complexity of the problem as well as the difficulty in gathering sufficient data, it is harder to conjecture an exact formula for $\kappa(d,e)$. However, all answers computed have the following general form.

\begin{conj}\label{conj:gen}
Fix $n$ and $p$. For each pair $(m,i)$ of integers with
\[
  m \geq 1, \quad 0 \leq i \leq n - 3,
\]
there is a character $P_i(m) \in A$ of degree $2m - 2 + n - i$, such that $P_0(m) = \Prim(m)$ is the $\Prim$ polynomial, and the following formula holds for $d \geq e > 0$:
\begin{equation}\label{eq:gen}
  \kappa(d,e) = \sum_{r,m,i} F^r P_i(m) \cdot s_{e - p^r m, \underbrace{\scriptstyle 0,\ldots,0}_{n-2}, d + p^r(2 - m - n + i) + n - 1}.
\end{equation}
\end{conj}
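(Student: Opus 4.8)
The plan is to argue by induction on $n$, the base case $n=3$ being Theorem~\ref{thm:Even_Carry}: there the index $i$ is forced to be $0$, Conjecture~\ref{conj:Nim}\ref{it:n=3} identifies $P_0(m)=\Prim(m)=\C_p(m)$, and the displayed formula collapses to \eqref{eq:h1_even_carry} (the degree shift by $n$ being harmless since $x_1\cdots x_n=1$ in $A$). For the inductive step the first, and I expect the hardest, task is to establish a recurrence for $\kappa(d,e)$ valid for all $n$ that reduces the problem on $\PP^{n-1}\times\PP^{n-1}$ to the one on $\PP^{n-2}\times\PP^{n-2}$, generalizing the Liu--Gao--Raicu recurrence \eqref{eq:recur}, which at present exists only for $n=3$. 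The natural attempt is a dimension reduction coming from the splitting $V=V'\oplus\Bbbk$: applied to the kernel/cokernel description of $K(d,e)$ via the map $\mu_{d,e}$ (or, geometrically, to the tautological sequence \eqref{eq:R_res}, using that $\R_n$ restricts to $\R_{n-1}\oplus\OO$ on a hyperplane $\PP^{n-2}\hookrightarrow\PP^{n-1}$), it should express $\kappa_n(d,e)$ through $\kappa_{n-1}$ of smaller arguments tensored with cohomology of line bundles on $\PP^{n-1}$. In characteristic $0$ this recovers the Borel--Weil--Bott answer; in characteristic $p$ the new contributions are Frobenius-twisted, and keeping track of them is exactly what produces the $F^r$ and the non-recursive ``$\Prim$'' data.

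Second, one must define the characters $P_i(m)$. Using the description \eqref{eq:monsky_piece} of $K$ as $M/\omega M$ and the (conjectural) fact, preceding Conjecture~\ref{conj:strip}, that $K$ is generated over $R$ by primitive cocycles and their $F_K$-iterates, one gets a minimal resolution of $K$ over $R$ whose $i$-th syzygy module is again built from $F_K$-iterates of an ``$i$-th primitive module''; I would define $P_i(m)$ to be the character of the degree-$(m-1,m)$ piece of that module, so that $P_0(m)$ is by construction the character of $K^\prim(m-1,m)=\Prim(m)$. The relevant resolution should have length $n-2$, accounting for the range $0\le i\le n-3$, and to make each $P_i(m)$ well defined and supported only as claimed one needs a ``strip concentration'' statement for each syzygy module in the spirit of Conjecture~\ref{conj:strip}. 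Proving these vanishing statements --- equivalently, showing that the only characteristic-$p$ failure of exactness in the divided-power Koszul complex associated to $D^d\R(e-1)$ is the expected Frobenius one --- is the second serious obstacle and is presumably of depth comparable to Conjecture~\ref{conj:strip} itself.

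Granting the recurrence and the $P_i$, the formula would then be checked by a secondary induction on $d+e$, imitating the proof of Theorem~\ref{thm:Even_Carry}. One splits $\kappa(d,e)$ into the terms produced by the recurrence, matches the ``top'' term (largest $r$) with the piece of the sum in which the argument of some $P_i$ is a one-digit number in base $p$ (the analogue of Proposition~\ref{prop:even_carry}\ref{ec:small}, which for $n=3$ is precisely the ``$\END$''-tile case of Conjecture~\ref{conj:exist tiles}), and reassembles the remaining terms via a recurrence for the $P_i$ generalizing Proposition~\ref{prop:even_carry}\ref{ec:rec} --- concretely, the tile-gluing rules of Conjectures~\ref{conj:exist tiles} and~\ref{conj:tiles end}, the duality of Conjecture~\ref{conj:tile_patterns}, and the vanishing of Schur functions with non-decreasing arguments to kill boundary terms. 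The ``fat-hook'' Schur factors $s_{e-p^rm,\,0,\ldots,0,\,d+p^r(2-m-n+i)+n-1}$ should fall out automatically: after the $F^r$-twist the factor tensored in is the cohomology of a single line bundle on $\PP^{n-1}$, concentrated in degrees $0$ and $n-1$ only, which forces the $n-2$ ``middle'' weights to coincide --- the same mechanism that produced the two-part Schur functions in Proposition~\ref{prop:trunc} and Remark~\ref{rem:Schur} --- and for $p=2$ the result should be compatible with Gao's Conjecture~\ref{conj:Gao}. I expect this last step to be long but essentially routine once the recurrence and the strip-concentration vanishing are in hand; those two are where the real content lies.
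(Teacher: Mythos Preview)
The statement you are trying to prove is a \emph{conjecture} in the paper, not a theorem; the paper offers no proof of it, and there is nothing to compare your argument against. What you have written is not a proof but a research programme, and you say as much: the inductive step requires a Liu--Gao--Raicu-type recurrence for general $n$, which you correctly identify as ``the hardest task'' and which does not currently exist; the definition of the $P_i(m)$ depends on a minimal-resolution picture for $K$ that in turn rests on Conjecture~\ref{conj:strip} and unstated higher-syzygy analogues; and the recombination step at the end appeals to the tile conjectures \ref{conj:exist tiles}--\ref{conj:tiles end}. Each of these is an open problem in the paper.

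Even your base case is not established. For $n=3$, Theorem~\ref{thm:Even_Carry} proves the displayed formula with $\C_p(m)^\vee$ in the role of $P_0(m)$, but the conjecture additionally asserts $P_0(m)=\Prim(m)$. Matching these requires $\Prim(m)=\C_p(m)$, which is precisely Conjecture~\ref{conj:Nim}\ref{it:n=3} --- stated in the paper as ``likely within reach'' but not proved. So the $n=3$ case of the present conjecture is itself open, and an induction on $n$ cannot get started. As a heuristic outline of how a proof \emph{might} eventually go, your sketch is reasonable and tracks the paper's own informal remarks (the paragraph after the conjecture says exactly that the $n=3$ case reduces to Theorem~\ref{thm:Even_Carry} \emph{after} replacing $\Prim$ by even-carry polynomials); but it should be presented as motivation or a strategy, not as a proof.
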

It is not hard to see that only finitely many choices of $r$, $m$, and $i$ satisfy the needed inequalities for the Schur function shown to be nonzero, namely
\[
  e - p^r m \geq 0 \geq d + p^r(2 - m - n + i) + n - 1.
\]

When $n = 3$, the only terms in \eqref{eq:gen} are indexed by the $\Prim$ polynomials, and on replacing them by the appropriate even-carry polynomials, this reduces to Theorem \ref{thm:Even_Carry}. From Theorem 1.6 of \cite{GaoRaicu}, expanding the truncated Schur function by \eqref{eq:formula for trunc_sym}, we get
\[
  P_i(0) = s_{\underbrace{\scriptstyle1,\ldots,1}_{n - i}}.
\]
Likewise, for $p = 2$, Theorem \ref{thm:KMRR} fits into this framework, expanding the truncated Schur function to get
\begin{equation} \label{eq:Gao}
  P_i(m) = (-1)^i \sum_{\substack{0 \leq j \leq i \\ j \equiv m + 1 \mod 2}} s_{\underbrace{\scriptstyle1,\ldots,1}_{n - i - j}} s_{\underbrace{\scriptstyle1,\ldots,1}_{j}} N_n\(\frac{m - j - 1}{2}\).
\end{equation}
It is tempting to conjecture the same for general $P_i(m)$, replacing the Nim polynomial by $\Prim(m-j)$ and eliminating the congruence condition on $j$; but this does not match the known data for $p = 3$, $n = 4$. It seems difficult to determine the shape of $P_i(m)$ in general.

\section*{Acknowledgments}
I thank Claudiu Raicu, Eric Riedl, Theresa (Tess) Anderson, Steven Sam, Darij Grinberg, and several anonymous referees for their help in various stages of this paper.

\section*{Code}
For readers who would like to experiment further with the questions raised in the paper, Macaulay2 code and documentation can be found in the author's GitHub repository at \url{github.com/emo916math/coho-incidence}.

\bibliography{EvenCarry_bib}
\bibliographystyle{plain}

\textsc{Evan M. O'Dorney, Department of Mathematical Sciences, Carnegie Mellon University, Pittsburgh, PA}
\end{document}